\numberwithin{equation}{section}
\newcommand{\Z}{\mathbb{Z}}
\newcommand{\N}{\mathbb{N}}
\newcommand{\Fp}{\mathbb{F}_{p}}
\newcommand{\Zp}{\mathbb{Z}_{p}}
\newcommand{\om}{\omega_{o}}
\newtheorem{theorem}{Theorem}[section]
\newtheorem{lemma}[theorem]{Lemma}
\newtheorem{corollary}[theorem]{Corollary}
\newtheorem{example}[theorem]{Example}
\theoremstyle{definition}
\newtheorem{definition}[theorem]{Definition}
\begin{document}

%\linenumbers

\title[An Analogue of the Davenport Constant]{Quadratic Symmetric Polynomials and an analogue of the Davenport Constant}

\author{H. Godinho, A. Lemos$^{\ast}$, V.G.L. Neumann$^{\ast}$ \and F.A.A. Oliveira}

\thanks{$\ast$ The second author was partially supported by FAPEMIG grant APQ-02546-21 and FAPEMIG grant RED-00133-21 and third author was partially supported by FAPEMIG grant APQ-03518-18 and FAPEMIG grant RED-00133-21.}

\address{Departamento de Matem\'{a}tica, Universidade de Bras\'ilia, Bras\'ilia-DF, Brazil}

\email{hemar@mat.unb.br}

\address{Departamento de Matem\'{a}tica, Universidade Federal de Vi\c cosa, Vi\c cosa-MG, Brazil}

\email{abiliolemos@ufv.com.br}

\address{Departamento de Matem\'{a}tica, Universidade Federal de Urbelândia, Uberlândia-MG, Brazil}

\email{victor.neumann@ufu.br}

\address{Instituto Federal de Educa\c c\~ao, Ci\^encia e Tecnologia do Rio Grande do Sul-Campus Farroupilha, Farroupilha-RS, Brazil}

\email{filipe.oliveira@farroupilha.ifrs.edu.br}

\subjclass[2010]{11P70, 11B30}

\keywords{Davenport constant, sequences, symmetric polynomials}
%\date{13 de Dezembro de 2019}
\begin{abstract}

In this paper, we define the constant $D(\varphi, p)$, an analogue for the Davenport constant,  for sequences on the finite field $\mathbb{F}_p$,  defined via quadratic symmetric polynomials. Next, we state a series of results presenting either the exact value of $D(\varphi, p)$, or lower and upper bounds for this constant.
\end{abstract}

\maketitle

\section{Introduction}

Let $p$ be a prime, and denote by $\Fp$ the field  of $p$ elements, and $\Fp^{*} = \Fp\backslash \{0\}$. A sequence $S=a_{1}a_{2}\cdots a_{m}$ of length $m$ in $\Fp$ will also be written as
\begin{equation}
S=[u_1]^{n_1}[u_2]^{n_2}\cdots[u_t]^{n_t} := \underbrace{u_{1}u_{1} \cdots u_{1}}_{n_1\mbox{ {\scriptsize times}}}\underbrace{u_{2}u_{2}\cdots u_{2}}_{n_2\mbox{ {\scriptsize times}}}\cdots\underbrace{u_{t}u_{t}\cdots u_{t}}_{n_t\mbox{ {\scriptsize times}}},
\end{equation}
where  $u_{1},\ldots,u_{t}$ are distinct elements of $\Fp$  and  $m= n_{1}+n_{2}+\cdots + n_{t}$.  A subsequence $T$ of $S$ is a sequence of the form
\[
T=[u_1]^{s_1}[u_2]^{s_2}\cdots[u_t]^{s_t}, \;\;\mbox{with} \;\;\; 0 \leq s_{j}\leq n_{j}, \;\;\; j=1,\ldots, t,
\]
where $s_{j}=0$ denotes that the element $u_{j}$ does not appear in $T$.

Let  $s_{n, k}$ be the symmetric polynomial
\begin{equation}
s_{n,k} := s_{n,k}(x_1, x_2, \ldots , x_n) = x_1^k + x_2^k + \cdots + x_n^k,
\end{equation}
in $n$ variables and degree $k$.  For any polynomial $F(y_{1},\ldots, y_{r}) \in\Fp[y_{1},\ldots, y_{r}]$ define the symmetric polynomial in $n$ variables
\[
\varphi(F,n)=F(s_{n,1},s_{n,2},\ldots,s_{n,r}) \in \Fp[x_1, x_2, \ldots, x_n] .
\]
%%%%%%%%%%%%%%%%%
\begin{definition}
Let $F$ be a polynomial in $\Fp[y_{1}, \ldots, y_{r}]$ and $S$ be a sequence of length $n$  in $\Fp$ denoted by $S = a_1 a_2 \cdots a_{n}$. We  define 
\[
\varphi(S):=F(s_{n,1},s_{n,2},\ldots,s_{n,r})(a_1,a_2,\ldots, a_n).
\]
Observe that  the order of appearance of an element in the sequence is not relevant for the evaluation of  $\varphi(S)$.
\end{definition}
%%%%%%%%%%%%%%%%%%%%%
\begin{example}
Let $F(y_{1},y_{2}) = ay_{1}^{2} + by_{1}y_{2}+ cy_{2}^{4}$,  and $S=a_{1}a_{2}a_{3}a_{4}$. Then
\[ \varphi(F,4)\; = a(s_{4,1})^{2} + b(s_{4,1})\cdot (s_{4,2}) + c(s_{4,2})^{4},\]  \[\mbox{and}\] 
\[
\varphi(S) = a(a_{1}+a_{2}+a_{3}+a_{4})^{2} + b(a_{1}+a_{2}+a_{3}+a_{4})(a_{1}^{2}+a_{2}^{2}+a_{3}^{2}+a_{4}^{2}) + c(a_{1}^{2}+a_{2}^{2}+a_{3}^{2}+a_{4}^{2})^{4}.
\]
\end{example}
%%%%%%%%%%%%%%%%%%%%%%%
\begin{definition}
Let $F$ be a polynomial in $\Fp[y_{1}, \ldots, y_{r}]$ and $S$ be a sequence of length $m$ in $\Fp$ denoted by $S = a_1 a_2 \cdots a_{m}$. 
\begin{enumerate}
\item[(i)] The sequence $S$  will be called a \textit{$\varphi$-zero sequence} if $\varphi(S) =0$.

\item[(ii)] The sequence $S$ will be called  a \textit{$\varphi$-zero free sequence} if for any subsequence $T= b_{1}\cdots b_{r}$ of $S$ we have $\varphi(T) \neq 0.$

\end{enumerate}
\end{definition}

\begin{definition} The Davenport  $\varphi$-Constant $D(\varphi, p)$ is defined as the smallest value $\ell$ such that any sequence in $\Fp$ of length at least $\ell$ has a $\varphi$-zero subsequence.
\end{definition}

\begin{definition}
Let $F$ be a polynomial in $\Fp[y_{1}, \ldots, y_{r}]$ and $S$ be a sequence of length $m$ in $\Fp$ denoted by $S = a_1 a_2 \cdots a_{m}$. The sequence $S$ will be called  an \textit{extremal $\varphi$-zero free sequence} if for any nonempty subsequence $T= b_{1}\cdots b_{r}$ of $S$ we have $\varphi(T) \neq 0$ and $m=D(\varphi, p)-1.$
\end{definition}

 The study of zero-sum sequences over abelian groups  is a very active and beautiful area of research in Additive Number Theory,
and the concept of \textit{$\varphi$-zero sequence} is an extension of these classical ideas, just by taking into the definitions above  the polynomial $F(y_{1})=y_{1}$, for in this case  $\varphi(F,n)=s_{n,1}= x_{1}+\cdots +x_{n}$ and $D(\varphi,\Fp) = D(\Zp)$, the classical Davenport constant over $\Zp$,  the additive subgroup of $\Fp$.  In this context, a $\varphi$-zero sequence is called a {\em zero-sum sequence}, a $\varphi$-zero free sequence is called a {\em zero-sum free sequence} and an extremal $\varphi$-zero free sequence is called an {\em extremal zero-sum free sequence}. For our purposes here in this paper, we need the following  consequence of the results given in  Olson\cite{OL1}.
%%%%%%%%%%%%%%%%%%%%
\begin{theorem}\label{olson} Let $p$ be a prime. Then
\begin{enumerate}
\item $D(\mathbb{Z}_p)=p$ and any extremal zero-sum free sequence in $\mathbb{Z}_p$  is of the form $[u]^{p-1}$, for $u\in\mathbb{Z}_p\backslash\{0\}$. \\
\item $D(\mathbb{Z}_p \oplus \mathbb{Z}_p) = 2p-1$.
\end{enumerate}
\end{theorem}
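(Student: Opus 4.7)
The plan is to handle the two parts via classical tools from additive combinatorics. For the bound $D(\mathbb{Z}_p) \leq p$ in part~(1), a pigeonhole argument on the $p+1$ partial sums $\sigma_0 = 0,\ \sigma_k = a_1 + \cdots + a_k$ of any sequence $a_1 \cdots a_p$ in $\mathbb{Z}_p$ produces indices $i < j$ with $\sigma_i = \sigma_j$, so that $a_{i+1} + \cdots + a_j = 0$ is a zero-sum subsequence. Sharpness and the existence of an extremal sequence of the claimed form are immediate from $[u]^{p-1}$, whose nonempty subsums form $\{u, 2u, \ldots, (p-1)u\} = \mathbb{Z}_p \setminus \{0\}$.

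For the uniqueness of the extremal sequences, let $S = a_1 \cdots a_{p-1}$ be zero-sum free (so each $a_i \ne 0$) and set $\Sigma_k^{*} = \{0, a_1\} + \{0, a_2\} + \cdots + \{0, a_k\}$, which is the set of subset sums of $a_1, \ldots, a_k$ together with $0$. Iterating the Cauchy--Davenport inequality yields $|\Sigma_k^{*}| \geq k+1$, and since $|\Sigma_{p-1}^{*}| \leq p$ this forces equality $|\Sigma_k^{*}| = k+1$ at every step. For $2 \leq k \leq p-3$ the hypotheses of Vosper's theorem are satisfied, whence $\Sigma_{k-1}^{*}$ must be an arithmetic progression whose common difference coincides (up to sign) with that of $\{0, a_k\}$, namely $\pm a_k$. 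Inductively $\Sigma_{k-1}^{*} = \{0, a_1, 2a_1, \ldots, (k-1)a_1\}$ has common difference $\pm a_1$, so $a_k = \pm a_1$; the choice $a_k = -a_1$ is excluded because $a_1 + a_k = 0$ would itself be a forbidden subsum. This yields $a_1 = a_2 = \cdots = a_{p-3}$, and the remaining two terms $a_{p-2}, a_{p-1}$ are pinned down by a direct case analysis: once $a_1, \ldots, a_{p-3}$ are fixed, the requirement that no nonempty subsum vanishes leaves only $a_{p-2} = a_{p-1} = a_1$.

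For part~(2), the inequality $D(\mathbb{Z}_p \oplus \mathbb{Z}_p) \geq 2p-1$ follows from the zero-sum free sequence $[(1,0)]^{p-1}[(0,1)]^{p-1}$ of length $2p-2$: any nonempty subsum has coordinates $(i,j)$ with $0 \leq i, j \leq p-1$ and $(i,j) \neq (0,0)$, hence is nonzero. For the upper bound, given $S = a_1 \cdots a_{2p-1}$ with $a_i = (b_i, c_i)$, I would consider
\[
f_1(x) = \sum_{i=1}^{2p-1} b_i\, x_i^{p-1}, \qquad f_2(x) = \sum_{i=1}^{2p-1} c_i\, x_i^{p-1}
\]
in $\mathbb{F}_p[x_1, \ldots, x_{2p-1}]$. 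Since $\deg f_1 + \deg f_2 = 2(p-1) < 2p-1$, the Chevalley--Warning theorem implies that the number of common zeros of $(f_1, f_2)$ is divisible by $p$. As the trivial zero $(0,\ldots,0)$ exists, a nontrivial common zero $x$ also exists; using that $t^{p-1} = 1$ for $t \in \mathbb{F}_p^{*}$ and $t^{p-1} = 0$ for $t = 0$, the support $I = \{i : x_i \neq 0\}$ is nonempty and satisfies $\sum_{i \in I} a_i = 0$.

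The main obstacle is the rigidity statement in part~(1). The Cauchy--Davenport bound alone is not enough to identify the extremal sequences exactly, and the equality analysis has to be supplemented by Vosper's theorem together with a direct check at the last two indices, where Vosper's size hypothesis fails. By contrast, the pigeonhole argument for part~(1), and both the lower and upper bounds in part~(2), are one-line consequences of classical results.
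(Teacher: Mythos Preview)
The paper does not prove this theorem at all: it is stated as a known consequence of Olson's results and simply cited. Your proposal therefore supplies a genuine proof where the paper offers none.

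Your argument is essentially correct and follows the classical route. A few remarks on points that deserve a bit more care:

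\begin{itemize}
\item In the uniqueness part of (1), the Vosper step is valid only for $2\le k\le p-3$, so it needs $p\ge 5$; the cases $p=2,3$ should be checked by hand (they are trivial).
\item The ``direct case analysis'' for $a_{p-2},a_{p-1}$ can be made explicit: from $a_1=\cdots=a_{p-3}=u$ one finds that zero-sum freeness forces $a_{p-2},a_{p-1}\in\{u,2u\}$, and the combinations involving $2u$ are eliminated by exhibiting a vanishing subsum (e.g.\ $2u+2u+(p-4)u=0$ or $2u+u+(p-3)u=0$). Alternatively, since the ordering of the $a_i$ is arbitrary, one may simply permute $a_{p-2}$ or $a_{p-1}$ into position $2$ and rerun the Vosper argument.
\item For part~(2), the Chevalley--Warning approach you give is the standard modern proof of Olson's bound for elementary $p$-groups and is entirely correct.
\end{itemize}

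In short: the paper defers to the literature, while you give a self-contained proof via partial sums, Cauchy--Davenport/Vosper, and Chevalley--Warning. Modulo the small-prime check and tightening the final case analysis, your proof is sound.
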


We refer the interested reader to \cite{GAO} and \cite{GAH} for more information on  zero-sum sequences over abelian groups. 

 Our work was very much inspired by  Bialostocki and Luong\cite{BIA}, who presented  a generalization of the
 Erd\H os-Ginzburg-Ziv Constant (EGZ-Constant), via quadratic symmetric polynomials. Our goal is to present three theorems describing  bounds for $D(\varphi,\Z_{p})$ for the  specific classes of quadratic polynomials, and also make whenever possible,  a complete list of all extremal $\varphi$-zero free sequences. For this purpose we define $M(\varphi, p)$ as the set of all extremal $\varphi$-zero free sequences in $\Fp.$ In almost all cases covered by Theorem \ref{Principal}, we were able to give a complete answer for the questions presented. 
 The results given in Theorems \ref{Theo3.1}, \ref{lb} and \ref{cota},  although simpler to state, were more challenging and reveled an interesting relation between $D(\varphi,p)$ and  the distribution of quadratic residues modulo $p$. This research is in its initial stage, but already reveals some interesting questions  waiting to be answered. We close this paper with some comments and  questions, indicating possible directions for the development of the theory. 

%%%%%%%%%%%%%%%%%%%%%%%%%%%%%%%%%%
%%%%%%%%%%%%%%%%%%%%%%%%%%%%%%%%%%
\section{Quadratic Symmetric Polynomials over $\Fp$}
%%%%%%%%%%%%%%%%%%%%%%%%%%%%%%%%%%%%%
%%%%%%%%%%%%%%%%%%%%%%%%%%%%%%%

Let $F(y_{1},y_{2}) = ay_{1}^{2} + by_{2} + cy_{1}$, in this case
\begin{equation}\label{quad}
\varphi(F,m) = F(s_{m,1}, s_{m,2}) = a s_{m,1}^2 + b s_{m,2} + cs_{m,1}.
\end{equation}

We are interested in symmetric quadratic polynomials, so we will always assume that either $a$ or $b$ are not zero in \eqref{quad}.
Next we present our first result.
%%%%%%%%%%%%%%%%%%%%%%%%%%%%%%
%%%%%%%%%%%   THM 2.1
\begin{theorem} \label{Principal} Let $p$ be an odd prime. Then the following statements are true:
\begin{enumerate}
\item[(i)] If $a = 0$ and $b \neq 0$, then $D(\varphi ,p) = p$ and 
\[
M(\varphi, p) = \{[u]^{\alpha}[-u-cb^{-1}]^{p-1-\alpha}\; / \; u\in \Fp\backslash\{0, -cb^{-1}\} \mbox{ and } 0 \le \alpha \le p-1\}.\]
\item[(ii)] If $a \neq 0$ and $b = c = 0$, then $D(\varphi , p) = p$ and
\[
M(\varphi, p) = \{[u]^{p-1}\; / \; u\in \Fp^{*}\}.
\]
\item[(iii)]  If $a \neq 0$, $b = 0$ and $c \neq 0$, then 
\[
D(\varphi , p) = p-1 \;\;\; \mbox{and} \;\;\;M(\varphi,p) = \{[ca^{-1}]^{p-2}\}.
\]
\item[(iv)] If $ab\neq 0$ then $D(\varphi , p ) \leq 2p-1$.
%\item [(v)] Let  $ab \neq 0$ and  $ c = 0$   and let $\lambda$ be  the smallest positive integer such that $\lambda \equiv ba^{-1} \pmod p.$ Then
%$$
%  D(\varphi , p)=1, \;\;\mbox{if}\;\; \lambda=p-1, \;\;\; \mbox{and} \;\;\;    D(\varphi , p)\geq p- \lambda+1,\;\;\;\mbox{otherwise}. 
%$$
%In particular, we have:
%\begin{enumerate}
%\item[(a)] If $\lambda = p-2$ then $3 \le  D(\varphi , p) \le \frac{k-1}{k}(p-1)+1;$
%\item[(b)] 
%If $\lambda = p-3$ then $4 \le  D(\varphi , p) \le \frac{2k-2}{k}(p-1)+1,$
%\end{enumerate}
%where $k$
%is the smallest positive integer greater than $2$ such that $k\mid p-1.$
\end{enumerate}
\end{theorem}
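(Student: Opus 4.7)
The plan is to reduce part (iv) directly to Theorem \ref{olson}(2). Given any sequence $S=a_1a_2\cdots a_m$ in $\Fp$ with $m\geq 2p-1$, I would associate to it the auxiliary sequence
\[
S' \;=\; (a_1,a_1^2)\,(a_2,a_2^2)\,\cdots\,(a_m,a_m^2)
\]
of $m$ elements in the group $\mathbb{Z}_p\oplus\mathbb{Z}_p$. Since $m\geq 2p-1=D(\mathbb{Z}_p\oplus\mathbb{Z}_p)$ by Theorem \ref{olson}(2), the sequence $S'$ admits a nonempty zero-sum subsequence, that is, an index set $\emptyset\neq I\subseteq\{1,\ldots,m\}$ with
\[
\sum_{i\in I}a_i\;=\;0 \quad\text{and}\quad \sum_{i\in I}a_i^{\,2}\;=\;0.
\]

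The second step is then immediate: letting $T$ be the subsequence of $S$ indexed by $I$, we have $s_{|I|,1}(T)=0$ and $s_{|I|,2}(T)=0$, so
\[
\varphi(T)\;=\;a\,s_{|I|,1}(T)^2+b\,s_{|I|,2}(T)+c\,s_{|I|,1}(T)\;=\;0,
\]
regardless of the values of $a,b,c$. Hence every sequence in $\Fp$ of length at least $2p-1$ contains a nonempty $\varphi$-zero subsequence, which gives $D(\varphi,p)\leq 2p-1$.

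I do not expect a genuine obstacle here: the symmetric-function structure of $\varphi$ means that the two coordinates $(x,x^2)$ completely encode the data $(s_{\cdot,1},s_{\cdot,2})$ needed to evaluate it, and Theorem \ref{olson}(2) is exactly what is required for this two-coordinate zero-sum problem. The hypothesis $ab\neq 0$ plays no role in this upper bound; it is relevant only because the other cases (i)--(iii) yield sharper values and are handled separately. The genuinely interesting question, which would not be addressed by this argument, is whether the bound $2p-1$ is tight or can be lowered when $ab\neq 0$; that would presumably require constructing explicit long $\varphi$-zero free sequences, a task left to the later theorems of the paper.
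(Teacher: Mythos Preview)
Your argument for part (iv) is correct and is essentially identical to the paper's own proof (Lemma \ref{dif}): the paper also maps $S$ to the sequence $(u_i,u_i^2)$ in $\mathbb{Z}_p\oplus\mathbb{Z}_p$, invokes Theorem \ref{olson}(2) to extract a subsequence with both coordinate sums zero, and concludes $\varphi(T)=0$. Note, however, that the statement has four parts and your proposal addresses only (iv); parts (i)--(iii) are handled in the paper by separate (also short) arguments reducing to the one-variable Davenport constant $D(\mathbb{Z}_p)$ and the classification of extremal zero-sum free sequences in Theorem \ref{olson}(1).
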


The proof of this Theorem will be presented in the form of a series of three lemmas.
%%%%%%%%%%%%%%%
%%%%%% Lemma 2.2
\begin{lemma}
 Let $p$ be an odd prime. Then the following statements are true:
\begin{enumerate}
\item[(i)] If $a = 0$ and $b \neq 0$, then $D(\varphi ,p) = p$ and 
\[
M(\varphi, p) = \{[u]^{\alpha}[-u-cb^{-1}]^{p-1-\alpha}\; / \; u\in \Fp\backslash\{0, -cb^{-1}\} \mbox{ and } 0 \le \alpha \le p-1\}.\]
\item[(ii)] If $a \neq 0$ and $b = c = 0$, then $D(\varphi , p) = p$ and
\[
M(\varphi, p) = \{[u]^{p-1}\; / \; u\in \Fp^{*}\}.
\]
\end{enumerate}
\end{lemma}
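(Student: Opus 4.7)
The plan is to reduce both parts to Olson's theorem (Theorem \ref{olson}) by transforming each sequence $S$ into a sequence of elements of $\mathbb{Z}_p$ whose zero-sum behavior mirrors $\varphi$-zero behavior.

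For case (i), with $a=0$ and $b\neq 0$, I would first rewrite
\[
\varphi(S) = b\sum_{i=1}^{m}a_i^2 + c\sum_{i=1}^{m}a_i = \sum_{i=1}^{m}a_i(ba_i+c) = \sum_{i=1}^{m}g(a_i),
\]
where $g(x)=bx^2+cx$. Factoring $b(x^2-y^2)+c(x-y)=(x-y)\bigl(b(x+y)+c\bigr)$ shows that $g(x)=g(y)$ iff $x=y$ or $x+y=-cb^{-1}$, and in particular $g^{-1}(0)=\{0,-cb^{-1}\}$. With this reformulation the upper bound $D(\varphi,p)\le p$ is immediate: if $|S|=p$, then the length-$p$ sequence $g(a_1),\dots,g(a_p)$ in $\mathbb{Z}_p$ has a nonempty zero-sum subsequence by Theorem \ref{olson}(1), and the corresponding subsequence of $S$ is $\varphi$-zero.

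For the lower bound and the description of $M(\varphi,p)$, suppose $S=a_1\cdots a_{p-1}$ is $\varphi$-zero free of length $p-1$. Then $g(a_1),\dots,g(a_{p-1})$ is a zero-sum free sequence in $\mathbb{Z}_p$ of length $p-1$, so by Theorem \ref{olson}(1) all its terms equal some common $u\in\mathbb{F}_p^{*}$. Hence every $a_i$ is a root of $bx^2+cx-u=0$, a polynomial with at most two roots whose sum is $-cb^{-1}$ by Vieta; writing one root as $v\in\mathbb{F}_p\setminus\{0,-cb^{-1}\}$ (neither $0$ nor $-cb^{-1}$ can occur because $g$ vanishes there and $u\neq 0$), the other is $-v-cb^{-1}$, yielding the claimed form. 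Conversely, for any $u\in\mathbb{F}_p\setminus\{0,-cb^{-1}\}$ one checks directly that $g(u)=g(-u-cb^{-1})=bu^2+cu\neq 0$, so any nonempty subsequence of length $k\le p-1$ of $[u]^{\alpha}[-u-cb^{-1}]^{p-1-\alpha}$ has $\varphi$-value $k(bu^2+cu)\neq 0$ in $\mathbb{F}_p$. This gives the lower bound $D(\varphi,p)\ge p$ and confirms the listed sequences are $\varphi$-zero free extremal.

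Case (ii) is essentially immediate: since $a\neq 0$ and $b=c=0$, one has $\varphi(S)=a s_{m,1}^2$, and because $\mathbb{F}_p$ is a field, $\varphi(S)=0$ iff $s_{m,1}=a_1+\cdots+a_m=0$. Thus $\varphi$-zero subsequences coincide with zero-sum subsequences of $S$ viewed in $\mathbb{Z}_p$, and both claims follow directly from Theorem \ref{olson}(1). The main obstacle I anticipate is bookkeeping in case (i): verifying that the parametrization in $M(\varphi,p)$ correctly captures the monochromatic extremes $\alpha=0,p-1$ and accounts for the involution $u\leftrightarrow -u-cb^{-1}$ that makes the listing redundant but not incorrect.
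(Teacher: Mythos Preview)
Your proposal is correct and follows essentially the same approach as the paper: both reduce to Olson's theorem via the substitution $g(x)=bx^2+cx$ in case (i) and via $\varphi(S)=a\,s_{m,1}^2$ in case (ii), and both characterize the fibers of $g$ by the relation $u_j=-u_i-cb^{-1}$. If anything, your write-up is slightly more complete, since you explicitly verify the converse direction (that every sequence in the listed set is indeed $\varphi$-zero free), which the paper leaves implicit.
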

\begin{proof} Initially suppose $a = 0$ and $b \neq 0$, then it follows from \eqref{quad} that for all $k \in \N$, 
\begin{equation}\label{fs}
\varphi(F,k)= \sum_{i=1}^{k} (bx_i^2 + cx_i) = \sum^{r}_{i=1} f(x_{i}), \;\;\; \mbox{with} \;\;\; f(u) = bu^{2} + cu.
\end{equation}
Given a sequence $T = u_1u_2 \cdots u_k$ in $\Fp$, consider the sequence $S = f(u_1) f(u_2) \cdots f(u_k)$  in $\Zp$. From \eqref{fs} and Theorem \ref{olson} it follows that $D(\varphi,p) \leq D(\Zp) = p$, and if $S$ is an extremal zero-sum free sequence in $\Zp$ then $S=[f(u)]^{p-1}$. Since 
\[f(u_{i}) = f(u_{j}) \quad \Leftrightarrow \quad u_{i} = u_{j} \, \mbox{ or } \, u_{j}= -u_{i} -cb^{-1},\]
we have that  $M(\varphi,p) = \{ [u]^{\alpha}[-u-cb^{-1}]^{p-1-\alpha}, \mbox{ where } u\in \Fp\setminus\{0, -cb^{-1}\}$.

If $a \neq 0$ and $b = c = 0$, then it follows that \eqref{quad} has the form 
\[
\varphi(F,k) = a s_{k,1}^2.
\]

 From Theorem \ref{olson}  it is easy to conclude that $D(\varphi , p ) = D(\Zp)= p$ and 
\[
M(\varphi,p) = \{[u]^{p-1} \, \, ; \, \, u\in \Fp\setminus\{0\}\}.
\]
\end{proof}
%%%%%%%%%%%%%%%%%%%%
%%%%%%%%% Lemma 2.3
\begin{lemma}Let $p$ be an odd prime. If $a \neq 0$, $b = 0$ and $c \neq 0$, then 
\[
D(\varphi , p) = p-1 \;\;\; \mbox{and} \;\;\;M(\varphi,p) = \{[ca^{-1}]^{p-2}\}.
\]
\end{lemma}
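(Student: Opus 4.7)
The plan is to exploit the factorization available when $b=0$. Substituting into \eqref{quad} gives
\[
\varphi(F,k) = a s_{k,1}^{2} + c\, s_{k,1} = s_{k,1}\bigl(a\, s_{k,1} + c\bigr),
\]
so for any sequence $T$ one has $\varphi(T)=0$ if and only if the sum of the entries of $T$ lies in the two-element set $\{0,-ca^{-1}\}$; these two values are distinct because $c\neq 0$. Hence a sequence is $\varphi$-zero free precisely when no nonempty subsequence has sum $0$ or $-ca^{-1}$.

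For the lower bound $D(\varphi,p)\ge p-1$, I will check that $S=[ca^{-1}]^{p-2}$ is $\varphi$-zero free: its nonempty subsequence sums are $k\cdot ca^{-1}$ for $1\le k\le p-2$, and $k\cdot ca^{-1}\in\{0,-ca^{-1}\}$ would force $k\equiv 0$ or $k\equiv p-1\pmod p$, neither of which occurs in this range. For the upper bound $D(\varphi,p)\le p-1$, let $S$ have length $p-1$. Either $S$ admits a nonempty zero-sum subsequence --- and we are done --- or $S$ is zero-sum free, in which case $|S|=D(\Zp)-1$ forces $S=[u]^{p-1}$ with $u\neq 0$ by Theorem~\ref{olson}. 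The set of nonempty subsequence sums of $[u]^{p-1}$ is $\{u,2u,\ldots,(p-1)u\}=\Fp^{*}$, which contains the nonzero element $-ca^{-1}$, producing a $\varphi$-zero subsequence.

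For the characterization of $M(\varphi,p)$, given an extremal $\varphi$-zero free sequence $S$ of length $p-2$, let $\Sigma_{0}(S)$ denote the set of subsequence sums of $S$, including the empty sum $0$. The hypothesis on $S$ yields $\Sigma_{0}(S)\subseteq\Fp\setminus\{-ca^{-1}\}$. The key step is to append $b=ca^{-1}$ to $S$ and verify that $S'=S\cdot b$ is zero-sum free of length $p-1$: any nonempty subsequence of $S'$ either lies in $S$ (and has nonzero sum since $S$ is zero-sum free) or has the form $T\cdot b$ with $T\subseteq S$, in which case its sum is $\sigma+b$ for some $\sigma\in\Sigma_{0}(S)$, and vanishing would force $\sigma=-ca^{-1}$, which is excluded. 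Theorem~\ref{olson} then forces $S'=[u]^{p-1}$, and since the appended entry $b=ca^{-1}$ appears in $S'$, we conclude $u=ca^{-1}$, whence $S=[ca^{-1}]^{p-2}$.

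The main challenge is really this uniqueness claim, and the key realization is that appending the single ``forbidden sum'' element $ca^{-1}$ to $S$ reduces the question to Olson's classification of extremal zero-sum free sequences; once this trick is identified, all verifications are direct, the only subtle point being to correctly handle the empty subsequence when converting between $\varphi$-zero freeness and ordinary zero-sum freeness.
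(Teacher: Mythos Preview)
Your proof is correct and follows essentially the same approach as the paper: both exploit the factorization $\varphi(F,k)=s_{k,1}(a\,s_{k,1}+c)$ to reduce $\varphi$-zero freeness to avoiding the subsequence sums $0$ and $-ca^{-1}$, obtain the lower bound via the explicit sequence $[ca^{-1}]^{p-2}$, the upper bound by invoking Olson's classification on a length-$(p-1)$ sequence, and the uniqueness by appending $ca^{-1}$ to an extremal $\varphi$-zero free sequence and again applying Olson. Your write-up is slightly more explicit in handling the empty subsequence when verifying that the augmented sequence is zero-sum free, but the underlying argument is the same.
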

\begin{proof}
Now let us assume  $ac \neq 0$ and $b = 0$, and we will have
\begin{equation}\label{C3}
\varphi(F,k) = a s_{k,1}^2 + cs_{k,1} =a(x_{1} + \cdots + x_{k})^{2} + c(x_{1} + \cdots + x_{k}).
\end{equation}

Hence  for any  zero-sum sequence $S$ in $\Zp$, we have $\varphi(S)=0$, therefore  it follows from Theorem \ref{olson} that  $D(\varphi,p) \leq D(\Zp)= p$. Let us suppose that we can find a  $\varphi$-zero free sequence $S_{0}=u_{1}u_{2}\cdots u_{p-1}$ of length $p-1$. In particular, $S_{0}$ must also be   a zero-sum free sequence in $\Z_{p}$ (see  \eqref{C3}), then it follows from Theorem \ref{olson} that $S_{0} = [u]^{p-1}$, for some $u\in\Zp\backslash\{0\}$.
Let $T_{j} =  [u]^{j}$ be a subsequence of $S_{0}$. Considering $T_{j}$ also a sequence in $\Fp$ we would have
\[
\varphi(T_{j}) = a(ju)^{2} + c(ju) = ju(aju +c), 
\]
and it is always possible to find a $j$ such that $\varphi(T_{j}) =0$, contradicting the hypothesis that $S_{0}$ is a $\varphi$-zero free sequence. Therefore $D(\varphi,p) \leq p-1$.

Now consider the sequence $S=[ca^{-1}]^{p-2}$ and observe that, for any $1\leq j  \leq p-2$ we have
\[
\varphi([ca^{-1}]^{j}) = jc^{2}a^{-1}(j+1) \neq 0,
\]
thus $S$ is a $\varphi$-zero free sequence, proving that  $D(\varphi,p)= p-1$.

Suppose $S=w_{1}w_{2}\cdots w_{p-2}$ is also a  $\varphi$-zero free sequence in $\Fp$. Then for any subsequence
 $T=v_{1}\cdots v_{t}$ of $S$ we have (see \eqref{C3})
\[
v_{1}+\cdots+ v_{t}\not\equiv 0 \pmod{p}\;\;\;\;\mbox{and}\;\; \;\; v_{1}+\cdots+ v_{t}\not\equiv -c\cdot a^{-1} \pmod{p}.
\]
In particular the sequence, with $v_{0}=\,ca^{-1}$, 
\[
w_{1}w_{2}\cdots w_{p-2}v_{0}
\]
is a zero-sum free sequence in $\Zp$. Again it follows from Theorem \ref{olson} that
\[
w_{1}=w_{2}=\cdots = w_{p-2}=\,ca^{-1},
\]
completing the proof.
\end{proof}
%%%%%%%%%%%%%%%%%%%%%%%%%%%%%%%%%%%%%%
%%%%%%%%%%%%%%%%%%%%%%%%%%%%%%%%%%
%%%%%%%%%%%%%%%%%%%%%%%%%%%%%%
%%%%%%%%%%%%%%%%%%%%%%
%% Lemma 2.4
\begin{lemma}\label{dif} 
 If   $ab\neq 0$ then $D(\varphi ,p ) \leq 2p-1$.
\end{lemma}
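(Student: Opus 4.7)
The plan is to reduce the problem directly to the known Davenport constant of $\mathbb{Z}_p \oplus \mathbb{Z}_p$, i.e.\ to part (2) of Theorem~\ref{olson}. The key observation is that the value $\varphi(T)$ for a subsequence $T=b_1 b_2 \cdots b_k$ depends only on the two quantities $s_{k,1}(b_1,\ldots,b_k)=\sum b_i$ and $s_{k,2}(b_1,\ldots,b_k)=\sum b_i^2$ in $\mathbb{F}_p$, via the formula
\[
\varphi(T) = a\,s_{k,1}^2 + b\,s_{k,2} + c\,s_{k,1}.
\]
In particular, if $s_{k,1}$ and $s_{k,2}$ both vanish in $\mathbb{F}_p$, then $\varphi(T)=0$ regardless of $a,b,c$. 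So it suffices to find a nonempty subsequence of any length-$(2p-1)$ sequence on which both the first and the second power sum simultaneously vanish.

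Given $S=a_1 a_2 \cdots a_{2p-1}$ in $\mathbb{F}_p$, I would lift it to the sequence
\[
S' \;=\; (a_1,a_1^2)\,(a_2,a_2^2)\cdots(a_{2p-1},a_{2p-1}^2)
\]
of length $2p-1$ in the group $\mathbb{Z}_p\oplus\mathbb{Z}_p$. Since $D(\mathbb{Z}_p\oplus\mathbb{Z}_p)=2p-1$ by Theorem~\ref{olson}(2), the sequence $S'$ must contain a nonempty zero-sum subsequence. Pulling back the indices, this yields a nonempty subsequence $T=a_{i_1}\cdots a_{i_k}$ of $S$ with
\[
\sum_{j=1}^{k} a_{i_j} \equiv 0 \pmod{p} \qquad\text{and}\qquad \sum_{j=1}^{k} a_{i_j}^{2} \equiv 0 \pmod{p}.
\]
Substituting into the expression above gives $\varphi(T)=a\cdot 0 + b\cdot 0 + c\cdot 0 = 0$, so $T$ is a $\varphi$-zero subsequence. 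This shows that every sequence of length $2p-1$ has a $\varphi$-zero subsequence, hence $D(\varphi,p)\leq 2p-1$.

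There is no serious obstacle to speak of once the right viewpoint is adopted: the only nontrivial ingredient is the Olson-type result $D(\mathbb{Z}_p\oplus\mathbb{Z}_p)=2p-1$, which is already quoted in the paper. The assumption $ab\neq 0$ is not actually needed for the argument itself; it merely explains why this bound is the interesting one in this case (the cases $a=0$ or $b=0$ collapse to the one-dimensional Davenport constant and produce the sharper bound $p$ established in the previous lemmas). I expect the subtle point, if any, to be purely bookkeeping: making sure that the subsequence produced in $\mathbb{Z}_p\oplus\mathbb{Z}_p$ is nonempty (which it is, by the definition of the Davenport constant) and that both coordinate sums translate back to the quantities $s_{k,1}$ and $s_{k,2}$ appearing in $\varphi$, which is immediate from the definition of the lift.
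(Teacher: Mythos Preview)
Your proposal is correct and follows essentially the same route as the paper: lift $S$ to $\mathbb{Z}_p\oplus\mathbb{Z}_p$ via $u\mapsto (u,u^2)$, apply Theorem~\ref{olson}(2) to extract a nonempty zero-sum subsequence, and observe that the vanishing of both coordinate sums forces $\varphi(T)=0$. The paper's version differs only cosmetically (it starts from $m\ge 2p-1$ rather than exactly $2p-1$).
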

\begin{proof} Let $S=u_{1}u_{2}\cdots u_{m}$ be any sequence in $\Fp$ of length $m\geq 2p-1$, and consider the sequence 
\[U = [u_1,u_1^2] [u_2,u_2^2] \cdots  [u_{m},u_{m}^2] \, \, \mbox{ over } \, \, \Zp\oplus \Zp.\]
According to Theorem \ref{olson}, since  $m\geq 2p-1$,  we can find a  subsequence $V$ of $U$ of length $k \leq 2p-1$, say
\[V=[v_{1},v_{1}^2][v_{2},v_{2}^2]\cdots [v_{k},v_{k}^2],\]
such that, modulo $p$ we have,
\[ 
v_{1}+ v_{2}+ \cdots + v_{k}\equiv 0 \;\;\;\mbox{and} \;\;\; v_{1}^{2}+ v_{2}^{2}+ \cdots + v_{k}^{2}\equiv 0
\]
hence 
\[\varphi(V)=a(v_{1}+ v_{2}+ \cdots + v_{k})^{2} + b( v_{1}^{2}+ v_{2}^{2}+ \cdots + v_{k}^{2}) + c(v_{1}+ v_{2}+ \cdots + v_{k}) = 0
\]
 in $\Fp$, and this proves that  $D(\varphi ,p ) \leq 2p-1$. 
\end{proof}
%%%%%%%%%%%%%%%%%%%%%
%%%%%%%%%%%%%%%%%%%

%%%%%%%%%%%%%%%%%%%%%%%%%%%%%%%%%%%%%%
\section{ The Polynomial  $\varphi(F,m) = as_{m,1}^{2} +bs_{m,2} $, with $ab\neq 0$}
%%%%%%%%%%%%%%%%%%%%%%%%%%%%%%%%%%
%%%%%%%%%%%%%%%%%%%%%%%%%%

In this section we are interested in the symmetric polynomial  $\varphi(F,m) = as_{m,1}^{2} +bs_{m,2} $, with $ab\neq 0$. With no loss in generality, we start by  rewriting the polynomial $\varphi(F,p)$ as 
\begin{equation}\label{mu=0}
\varphi(F_{\lambda},k) =  s_{k,1}^2 + \lambda s_{k,2}\, , \;\;\;\mbox{with}\;\;\; \lambda\neq 0. 
\end{equation}
 This is  the main result of this section.
%%%%
\begin{theorem}\label{Theo3.1} Let $p$ be an odd prime, then 
\[
  D(\varphi , p)=1, \;\;\mbox{if}\;\; \lambda=p-1, \;\;\;\; \mbox{and} \;\;\;\;  2p-1 \geq \;  D(\varphi , p) \; \geq p- \lambda+1,\;\;\;\mbox{otherwise}. 
\]
In particular, for $p \ge 5$ we have:
\begin{enumerate}
\item[(a)] if $\lambda = p-2$ then $3 \le  D(\varphi , p) \le \frac{k-1}{k}(p-1)+1;$
\item[(b)] if $\lambda = p-3$ then $4 \le  D(\varphi , p) \le \frac{2k-2}{k}(p-1)+1,$
\end{enumerate}
where $k$  is the smallest positive integer greater than $2$ such that $k\mid p-1.$
\end{theorem}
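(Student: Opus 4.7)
The claim splits by cases. If $\lambda=p-1$, then $\varphi([a])=a^2(1+\lambda)=0$ for every $a\in\Fp$, so every singleton is a $\varphi$-zero subsequence and $D(\varphi,p)=1$. For $\lambda\ne p-1$ the upper bound $D(\varphi,p)\le 2p-1$ is already Lemma~\ref{dif}, so it remains to produce a $\varphi$-zero free sequence of length $p-\lambda$ for the lower bound, and to refine the upper bounds in cases (a) and (b).

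\emph{Lower bound.} I would take $S=[1]^{p-\lambda-1}[v]$ for a suitable $v\in\Fp^*$. Because $\varphi([1]^j)=j(j+\lambda)$, all subsequences $[1]^j$ with $1\le j\le p-\lambda-1$ satisfy $\varphi\ne 0$, and $\varphi([v])=v^2(1+\lambda)\ne 0$ since $v\ne 0$ and $\lambda\ne-1$. The remaining subsequences $[1]^j[v]$ for $1\le j\le p-\lambda-1$ evaluate to the quadratic
\[Q_v(j)=j^2+(2v+\lambda)j+v^2(1+\lambda),\]
whose discriminant in $j$ is $\Delta(v)=\lambda\bigl(\lambda+1-(2v-1)^2\bigr)$. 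It suffices to pick $v$ with $\Delta(v)$ a quadratic nonresidue, so that $Q_v$ has no zero in $\Fp$ at all. As $v$ ranges over $\Fp$ the values $\Delta(v)$ form a set of size $(p+1)/2$, and a case analysis on the quadratic characters of $\lambda$ and $\lambda+1$ shows that this set always meets the $(p-1)/2$ nonresidues except precisely when $\lambda\equiv-1$, which is excluded. Verifying this existence rigorously is the main technical step of the theorem.

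\emph{Sharper upper bounds in (a) and (b).} The key observation is that whenever $k\mid p-1$ with $k>2$ and $\zeta\in\Fp^*$ is a primitive $k$-th root of unity, every orbit sequence $T_x=[x][x\zeta]\cdots[x\zeta^{k-1}]$ with $x\in\Fp^*$ satisfies
\[s_{k,1}(T_x)=x\sum_{i=0}^{k-1}\zeta^i=0\quad\text{and}\quad s_{k,2}(T_x)=x^2\sum_{i=0}^{k-1}\zeta^{2i}=0,\]
the second equality using $\zeta^2\ne 1$ (valid because $k>2$). Therefore $\varphi(T_x)=0$ for every choice of $\lambda$, so a $\varphi$-zero free sequence can never contain a full $\langle\zeta\rangle$-orbit; since the $(p-1)/k$ orbits partition $\Fp^*$, the set of distinct nonzero values appearing in any $\varphi$-zero free sequence has at most $(k-1)(p-1)/k$ elements.

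Now $\lambda=p-2$ gives $\varphi([a]^2)=2a^2(2+\lambda)=0$, so every value appears at most once in a $\varphi$-zero free sequence and the length bound is $(k-1)(p-1)/k$, proving (a) after adding $1$. For (b), $\lambda=p-3$ yields $\varphi([a]^3)=3a^2(3+\lambda)=0$ but $\varphi([a]^2)\ne 0$, so multiplicities can reach $2$ and the bound doubles to $2(k-1)(p-1)/k=\frac{2k-2}{k}(p-1)$. The matching lower bounds $3$ and $4$ are instances of $p-\lambda+1$ realized by $[1][2]$ and $[1]^2[2]$. The main obstacle of the whole proof is the rigorous choice of $v$ in the lower-bound construction; the orbit argument driving (a) and (b) is clean once noticed.
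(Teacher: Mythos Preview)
Your proposal follows the paper's proof essentially step for step: the same trivial case $\lambda=p-1$, the same lower-bound witness $[u]^{p-\lambda-1}[v]$ with $v$ chosen so that the discriminant $\Delta(v)=\lambda\bigl(\lambda+1-(2v-1)^2\bigr)$ of $Q_v$ is a nonresidue (your formula and the paper's $(2t+\lambda)^2-4(\lambda+1)t^2$ are identical), and the same $k$-th-root-of-unity orbit argument for the refined upper bounds in (a) and (b). The only substantive difference is in how the existence of such a $v$ is established: where you sketch a counting heuristic and defer the ``case analysis on the quadratic characters of $\lambda$ and $\lambda+1$'', the paper carries out an explicit construction, splitting on whether $-(\lambda+1)$ is a square in $\Fp$ and in each branch exhibiting a concrete $t$ with $\Delta(t)$ a nonresidue (using in the harder branch an auxiliary $s$ with $s^2+1$ a nonresidue).
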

%%%%%%%%%%%%%%%%%%%%%%%%%%%%
%%%%%%%%%%%%%%%%%%%
We are going to divide the proof into two lemmas, taking into account the possible values of $\lambda$ (see \eqref{mu=0}).

%%%%%%%%%%%%
%% Lemma 2.5
\begin{lemma}\label{lemma2.5bis}
Let $p$ be an odd prime, then 
\[
D(\varphi , p)=1, \;\;\mbox{if}\;\; \lambda=p-1, \;\;\; \mbox{and} \;\;\;  2p-1\ge   D(\varphi , p)\geq p- \lambda+1,\;\;\;\mbox{otherwise}. 
\]
\end{lemma}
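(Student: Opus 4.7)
I would split the argument into two cases depending on whether $\lambda = p-1$ or $\lambda \in \{1,2,\ldots,p-2\}$. When $\lambda = p-1 \equiv -1 \pmod{p}$, the identity $\varphi(F_{\lambda},1)(u) = (1+\lambda)u^{2} \equiv 0$ holds for every $u \in \Fp$, so every singleton is already a $\varphi$-zero sequence. Consequently every nonempty sequence contains a $\varphi$-zero subsequence, and $D(\varphi,p) = 1$; this case is essentially immediate.

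For $\lambda \in \{1,\ldots,p-2\}$, the upper bound $D(\varphi,p) \le 2p-1$ is already furnished by Lemma \ref{dif}, so the real work lies in the lower bound $D(\varphi,p) \ge p-\lambda+1$. I would establish it by exhibiting a $\varphi$-zero free sequence of length exactly $p-\lambda$; the candidate is $S = [1]^{p-\lambda-1}[v]$ for a suitably chosen $v \in \Fp^{*}$. For the ``pure'' subsequences $T = [1]^{s}$ with $1 \le s \le p-\lambda-1$, one has $\varphi(T) = s(s+\lambda)$, which is automatically nonzero modulo $p$ in that range. For subsequences of the form $[1]^{s}[v]$, a direct expansion gives
\[
\varphi\bigl([1]^{s}[v]\bigr) \;=\; s^{2} + (2v+\lambda)s + (1+\lambda)v^{2},
\]
a quadratic in $s$ whose discriminant, after the substitution $y = 2v-1$, simplifies to $\Delta = \lambda(\lambda+1-y^{2})$. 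If $v$ is chosen so that $\Delta$ is a quadratic non-residue modulo $p$, then this quadratic has no root in $\Fp$, so no subsequence $[1]^{s}[v]$ is $\varphi$-zero; the only further requirement is $\varphi([v]) = (1+\lambda)v^{2}\ne 0$, equivalent to $v \ne 0$, i.e., $y \ne -1$.

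The critical step, and where I expect the main technical obstacle, is to guarantee the existence of such a $v$. I would invoke the classical Jacobsthal-type identity $\sum_{y \in \Fp}\chi(y^{2}-a) = -1$ for $a \not\equiv 0 \pmod{p}$, where $\chi$ denotes the Legendre symbol, to obtain
\[
\sum_{y \in \Fp}\chi\bigl(\lambda(\lambda+1-y^{2})\bigr) \;=\; -\chi(-\lambda),
\]
a quantity of absolute value $1$. Partitioning $\Fp$ according to $\chi(\Delta) \in \{-1,0,+1\}$, and noting that at most two values of $y$ satisfy $\Delta = 0$, the count of $y$ producing a non-residue comes out to $(p - C + \chi(-\lambda))/2 \ge 1$ for every $p \ge 5$; the single edge case $p=3$, $\lambda=1$ is handled by direct inspection of the sequence $[1,2]$. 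Finally, $y = -1$ gives $\Delta = \lambda^{2}$, a nonzero square, so $y = -1$ is automatically excluded from the non-residue set, and the resulting $v = (y+1)/2 \ne 0$ completes the construction and the lower bound.
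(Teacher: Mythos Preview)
Your proof is correct and follows the same overall architecture as the paper's: both handle $\lambda=p-1$ trivially, take the upper bound from Lemma~\ref{dif}, and for the lower bound construct a $\varphi$-zero free sequence of the shape $[u]^{\,p-\lambda-1}[w]$ by choosing the extra element so that the quadratic $s^{2}+(2w+\lambda)s+(\lambda+1)w^{2}$ has non-residue discriminant. The difference lies solely in how that extra element is found. The paper argues by an explicit case split: if $-(\lambda+1)$ is a non-square it sets $t=-2^{-1}\lambda$ directly, and otherwise writes $-(\lambda+1)=r^{2}$, picks $s$ with $s^{2}+1$ a non-residue, and manufactures $t=\lambda(2rs-2)^{-1}$, checking $t\neq 1$ in each branch. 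Your route is more uniform: after the substitution $y=2v-1$ reduces the discriminant to $\lambda(\lambda+1-y^{2})$, a single Jacobsthal-type character sum $\sum_{y}\chi(y^{2}-(\lambda+1))=-1$ gives the count of non-residue values, and the observation that $y=\pm 1$ both yield $\Delta=\lambda^{2}$ (a nonzero square) disposes of the degenerate choices $v=0$ and $v=1$ simultaneously. Your argument is shorter and avoids ad~hoc case analysis, at the cost of invoking the character-sum identity; the paper's argument is more elementary and fully constructive. One minor remark: you only mention explicitly that $y=-1$ is excluded, but the same computation shows $y=1$ is excluded as well, which is what guarantees $v\neq 1$ and hence that the subsequence $[1]^{p-\lambda}$ never arises; you may wish to state this.
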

\begin{proof}
%\begin{proof}  Let us  rewrite the symmetric polynomial $\varphi(F,p)$ as 
%\begin{equation}\label{mu=0}
%\varphi(F_{\lambda},k) =  a(s_{k,1}^2 + \lambda s_{k,2}).
%\end{equation}
The upper bound follows from Theorem \ref{Principal} (iv).
Next, observe that if  $S=[u]^{k}$ then it follows from  \eqref{mu=0} that
\begin{equation}\label{PL}
		\varphi(S) = (uk)^{2} + \lambda ku^{2} = u^{2}k(k + \lambda ).
\end{equation}
	
	If  $\lambda = p-1$ then $\varphi([u]) = u^{2}-u^{2}=  0$ (see \eqref{PL}), hence any sequence has a $\varphi$ - zero subsequence, thus $ D(\varphi , p)=1$.  For  $\lambda \neq p-1$,  (see \eqref{PL}), we can have $\varphi([u]^{k})\neq 0$,  as long as $k \leq (p- \lambda) -1$. 
Now, consider the sequence  $S=[u]^{p-\lambda-1}[tu]$ for some $u,t \in \mathbb{F}_q^*$ with $t\neq 1$.
From the considerations above, we have $\varphi(T)\neq 0$ for the subsequence $T=[u]^k,$ with $1 \le k \le p-\lambda -1.$
Let us consider now the subsequence $T=[u]^k[tu].$ In this case
\[
\varphi(T)= (ku + tu)^2 + \lambda (ku^2 +t^2 u^2)=u^2 (k^2 + (2t+\lambda)k + (\lambda + 1)t^2).
\]
Observe that if $\Delta(t)= (2t+\lambda)^2 - 4(\lambda+1)t^2$ is not a square in $\mathbb{F}_p$, then $\varphi(T)\neq 0$ for any $k \in \mathbb{F}_p,$  and observe also that $\lambda \neq 0$ and $\lambda+1\neq 0,$ since $1\le  \lambda \le p-2.$
If $-(\lambda+1)$ is not a square,
then $-(\lambda+1)\neq 1$ and so $-2^{-1}\lambda \neq 1.$ Thus,
it suffices to choose $t=-2^{-1}\lambda\neq 1,$ since
$\Delta(-2^{-1}\lambda)=-(\lambda+1)\lambda^2.$ 
So  let us consider that there exists $r\in \mathbb{F}_p^*$ such that
$r^2=-(\lambda+1).$
%, and in this case we must have
%$p\ge 5$.
%Since the  quadratic residues are not in arithmetic progression (see Lemma 3 in \cite{Chowla})
Since there exist  non-quadratic residues, there must exist $s\in \mathbb{F}_p^*$ such that $s^2+1$ is not a square. Since  $(-s)^2+1$  is also not a square, we can  choose this $s$  in such way  that  $rs \neq 1$.  Rewrite $\Delta(t)$ as
\[
\Delta(t)=(2t+\lambda)^2 + (2rt)^2=(2rt)^2(((2t+\lambda)(2rt)^{-1})^2+1).
\]
Observe that $\lambda (2rs-2)^{-1}\neq 1,$ since if $\lambda (2rs-2)^{-1}=1$ then $\lambda=2rs-2$. But from
$r^2=-(\lambda +1)$, we get $(r+s)^2=s^2+1$. This is not possible, since $s^2+1$ is not a square.
So, we may choose
$t=\lambda (2rs-2)^{-1}\neq 1$ and we have 
\[
(2t+\lambda)(2rt)^{-1}=s \;\;\;\mbox{ and } \;\;\; \Delta(t)=(2rt)^2(s^2+1).
\]
Since $\Delta(t)$  is not a square, this implies that $S=[u]^{p-\lambda-1}[tu]$ is a $\varphi$-zero free sequence and
$D(\varphi , p)\geq p- \lambda+1.$

%If $\left(\frac{2}{p} \right)=-1,$ it suffices to choose $t=2^{-1},$ since
%$\Delta(2^{-1})=2.$ So  let us consider that there exists $r\in \mathbb{F}_p^*$ such that $r^2=2$, and in this case we must have
%$p\ge 7$. Since the  quadratic residues are not in arithmetic progression (see Lemma 3 in \cite{Chowla}), there must exist $s\in \mathbb{F}_p^*$ such that $s^2-1$ is not a square. Since  $(-s)^2-1$  is also not a square, we can  choose this $s$  in such way  that  $2s \neq r$.  Rewrite $\Delta(t)$ as
%$$
%\Delta(t)=(2t+1)^2 - (2rt)^2=(2rt)^2((2t+1)(2rt)^{-1}-1)((2t+1)(2rt)^{-1}+1).
%$$
%Choosing $t=(2rs-2)^{-1}$ we have 
%$$
%(2t+1)(2rt)^{-1}=s \;\;\;\mbox{ and } \;\;\; \Delta(t)=(2rt)^2(s^2-1).
%$$
%Since $\Delta(t)$  is not a square, this implies that $S=[u]^{p-\lambda-1}[tu]$ is a $\varphi$-zero free sequence and
%$D(\varphi , p)\geq p- \lambda+1.$
\end{proof}
%%%%%%%%%%%%%%%%%%
%%%%%%%%%%%%%%%%%%%
%%%  Lemma 2.6
%%%%%%%%%%%%%

\begin{lemma}\label{L2.6}
Let $p$ be an odd prime such that $p\ge 5,$ then 
\[
D(\varphi , p)
\le
\left\{
\begin{matrix}
\dfrac{k-1}{k}(p-1)+1 & \textrm{if } \lambda = p-2;\\ \\
\dfrac{2k-2}{k}(p-1)+1 & \textrm{if } \lambda = p-3,
\end{matrix}
\right. 
\]
where $k$
is the smallest positive integer greater than $2$ such that $k\mid p-1.$
\end{lemma}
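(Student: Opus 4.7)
The plan is to exploit the multiplicative subgroup structure of $\Fp^*$. Let $H$ denote the unique subgroup of $\Fp^*$ of order $k$, which exists because $k\mid p-1$, and fix a primitive $k$-th root of unity $\zeta$ in $\Fp$, so $H=\{1,\zeta,\ldots,\zeta^{k-1}\}$. The heart of the argument is that, because $k>2$, both power sums
\[
\sum_{h\in H}h=\sum_{i=0}^{k-1}\zeta^i=0 \qquad\text{and}\qquad \sum_{h\in H}h^2=\sum_{i=0}^{k-1}\zeta^{2i}=0
\]
vanish; the second identity requires precisely that $\zeta^2\neq 1$, i.e.\ $k>2$. Consequently, for every $g\in\Fp^*$, the coset $gH$ regarded as a sequence of $k$ distinct elements satisfies $s_{k,1}=g\sum_{h\in H}h=0$ and $s_{k,2}=g^2\sum_{h\in H}h^2=0$, and hence $\varphi(gH)=0$ for any value of $\lambda$.

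For the case $\lambda=p-2$, the identity $\varphi([u]^j)=u^2 j(j+\lambda)=u^2 j(j-2)$ together with $\varphi([0])=0$ forces any $\varphi$-zero free sequence $S$ to consist of distinct nonzero elements, so $S$ identifies with a subset $A\subseteq\Fp^*$. If $A$ contained a full coset $gH$, that coset would be a $\varphi$-zero subsequence by the observation above, contradicting zero-freeness. Therefore $|A\cap gH|\le k-1$ for each of the $(p-1)/k$ cosets of $H$, yielding $|S|=|A|\le(k-1)(p-1)/k$ and the stated bound $D(\varphi,p)\le\frac{k-1}{k}(p-1)+1$.

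For the case $\lambda=p-3$, the identity $\varphi([u]^j)=u^2 j(j-3)$ together with $\varphi([0])=0$ limits each element of $\Fp^*$ to multiplicity at most $2$ in a $\varphi$-zero free $S$, and excludes $0$ altogether. Fix a coset $gH$ and let $n_h\in\{0,1,2\}$ be the multiplicity of $gh$ in $S$. If $\sum_{h\in H}n_h\ge 2k-1$, then $n_h\ge 1$ for every $h\in H$ (any zero entry would cap the sum at $2(k-1)<2k-1$), and the subsequence consisting of one copy of each $gh$ realises the full coset, hence has $\varphi$-value zero, a contradiction. So each coset contributes at most $2k-2$ to $|S|$, and summing over the $(p-1)/k$ cosets gives $|S|\le 2(k-1)(p-1)/k$, proving the second bound. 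The only genuine obstacle is the key power-sum vanishing over $H$; the rest is pigeonhole over the cosets of $H$, with the modest subtlety being the multiplicity bookkeeping in the $\lambda=p-3$ case that ensures the entire coset is present in $S$ before the required one-of-each subsequence can be extracted.
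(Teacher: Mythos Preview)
Your proof is correct and follows essentially the same route as the paper: both use the order-$k$ subgroup $H=\langle\beta\rangle$ of $\Fp^{*}$, observe that $k>2$ forces $\sum_{h\in H}h=\sum_{h\in H}h^{2}=0$ so every full coset is a $\varphi$-zero subsequence, deduce the multiplicity caps $d_j\le1$ (resp.\ $d_j\le2$) from $\varphi([u]^{j})=u^{2}j(j+\lambda)$, and then pigeonhole over the $(p-1)/k$ cosets. Your write-up in the $\lambda=p-3$ case is in fact slightly more explicit than the paper's, spelling out why a coset contributing at least $2k-1$ terms must contain every element of the coset at least once.
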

\begin{proof} 
Let $\alpha\in \mathbb{F}_p^*$ be a generator of $\mathbb{F}_p^{*}$ and let
$\beta=\alpha^{\frac{p-1}{k}}.$   Since $k>2$, we have $\beta^2\ne 1$ and for all $a \in \mathbb{F}_p^*$
\vspace{-0.2cm}
\begin{equation}\label{beta}
\sum_{j=0}^{k-1} a \beta^j = a \frac{\beta^k-1}{\beta-1} = 0 \mbox{ and } \sum_{j=0}^{k-1} (a \beta^j)^2 = a^2 \frac{\beta^{2k}-1}{\beta^2-1} = 0.
\end{equation}

Hence, a $\varphi$-zero free sequence  $S=[v_1]^{d_{1}}[ v_{2}]^{d_{2}} \cdots [v_{m}]^{d_{m}}$  can not have  a subsequence of the form  $V=[a][a\beta]\cdots [a\beta^{k-1}]$,  since  $\varphi(V)=0$ (see \eqref{beta}).
From the considerations following \eqref{PL}, if $\lambda = p-2$ , we must have $d_{1}=\cdots=d_{m}=1$, so the $\varphi$-zero free sequence  $S$  contains at most $k-1$ elements of each lateral class of the group generated by $\beta.$ Since there are $\frac{p-1}{k}$ lateral  classes we must have \vspace{-0.1cm}\[D(\varphi , p)  \le (k-1)\frac{p-1}{k} + 1.\]\vspace{-0.15cm} 
By a similar reasoning, if  $\lambda = p-3$ then  we must have, for any $j\in\{1,2,\ldots, m\}$, $d_{j}\leq 2$,  hence \[D(\varphi , p)  \le 2(k-1)\frac{p-1}{k} +  1,\] \vspace{-0.3cm} completing the proof.
\end{proof}
%%%%%%%%%%%%%%%%%%%%%%%%%%%%
%%%%%%%%%%%%%%%%%%%%%%%%%%%%
A computer search using Sagemath \cite{SAGE}, gives us Table \ref{table_sec2}
containing the exact value for $D(\varphi ,p)$, for $F=x_1^2+\lambda x_2$, $3 \le p\leq 11$
and $1 \le \lambda \le p-2,$ and also presenting examples of  extremal $\varphi$-zero free sequences and the cardinality of the set $M(\varphi, p)$. Observe that, when $\lambda = p-2$,  the upper bound given in Lemma \ref{L2.6} is attained when $p=5$
and $p=7,$ but as $p$ grows (bigger than 17, for example), a computer search reveals that  the exact value of $D(\varphi ,p)$ tends to be smaller than $p/2$.
\begin{table}[H]
\centering
{\small
\begin{tabular}{cccc}
\hline\noalign{\smallskip}
\multirow{2}{*}{$F$} &
\multirow{2}{*}{$D(\varphi,p)$} & 
\multirow{2}{*}{Some elements of $M(\varphi,p)$} &
\multirow{2}{*}{$|M(\varphi,p)|$}\\
& & & \\
\noalign{\smallskip}\hline\noalign{\smallskip}
\vspace{0.1cm} $x_1^2+x_2$  & $D(\varphi,3)=3$ & $[1][2]$ & $1$\\
\vspace{0.1cm} $x_1^2+x_2$  & $D(\varphi,5)=7$ & $[1]^3[4]^3, \;\;[2]^3[3]^{3}$ & $2$ \\
\vspace{0.1cm} $x_1^2+2x_2$ & $D(\varphi,5)=6$  &$[1]^2[2]^2[4],\;\;[1]^2[2][3]^2$ & $4$\\  
\vspace{0.1cm} $x_1^2+3x_2$ & $D(\varphi,5)=4$ & $[1][2][3],\;\;[1][2][4]$ & $4$\\
\vspace{0.1cm} $x_1^2+x_2$  & $D(\varphi,7)=7$ &$[1]^5[3],\;\;[1]^3[6]^3$ & $15$\\
\vspace{0.1cm} $x_1^2+2x_2$ & $D(\varphi,7)=8$ &$[1]^4[5]^3,\;\;[2]^4[3]^3$ & $6$\\
\vspace{0.1cm} $x_1^2+3x_2$ & $D(\varphi,7)=8$ &$[1]^3[3]^3[5],\;\;[2]^2[4]^3[6]^2$ & $12$\\
\vspace{0.1cm} $x_1^2+4x_2$ & $D(\varphi,7)=5$ &$[1]^2[2]^2,\;\;[3]^2[4]^2$ & $9$\\
\vspace{0.1cm} $x_1^2+5x_2$ & $D(\varphi,7)=5$ &$[1][2][3][5],\;\;[2][3][4][6]$ & $9$\\
\vspace{0.1cm} $x_1^2+x_2$  & $D(\varphi,11)=13$&$[1]^9[3]^3,\;\;[2]^9[6]^2[7]$ & $40$\\
\vspace{0.1cm} $x_1^2+2x_2$ & $D(\varphi,11)=12$ &$[1]^8[4]^3,\;\;[2]^8[5]^2[8]$ & $50$\\
\vspace{0.1cm} $x_1^2+3x_2$ & $D(\varphi,11)=12$ &$[1]^6[7]^2[8]^3,\;\;[2]^2[5]^6[7]^3$ & $10$\\
\vspace{0.1cm} $x_1^2+4x_2$ & $D(\varphi,11)=11$ &$[1]^6[6]^4,\;\;[3]^5[8]^5$ & $15$\\
\vspace{0.1cm} $x_1^2+5x_2$ & $D(\varphi,11)=11$ &$[1]^5[2]^5,\;\;[3]^5[7]^5$ & $10$\\
\vspace{0.1cm} $x_1^2+6x_2$ & $D(\varphi,11)=11$ &$[1]^4[4]^3[10]^3,\;\;[4]^4[5]^3[7]^3$ & $10$\\
\vspace{0.1cm} $x_1^2+7x_2$ & $D(\varphi,11)=9$ &$[1]^3[5]^3[7]^2,\;\;[4]^3[6]^2[9]^3$ & $10$\\
\vspace{0.1cm} $x_1^2+8x_2$ & $D(\varphi,11)=8$ &$[1]^2[2]^2[3][5][6],\;\;[3][6]^2[7]^2[8]^2$ & $60$\\
\vspace{0.1cm} $x_1^2+9x_2$ & $D(\varphi,11)=6$ &$[1][2][3][4][7],\;\;[4][5][6][7][9]$ & $60$\\
\noalign{\smallskip}\hline
\end{tabular}
}
\caption{\footnotesize{Exact Values of $D(\varphi,p)$ for $F=x_1^2+\lambda x_2$}}
\label{table_sec2}
\end{table}

%%%%%%%%%%%%%%%%%%%%%%%%%%%%%%%%%%%%%%
\section{ The Polynomial  $\varphi(F,m) = as_{m,1}^{2} +bs_{m,2} + cs_{m,1}$, with $abc\neq 0$}
%%%%%%%%%%%%%%%%%%%%%%%%%%%%%%%%%%
%%%%%%%%%%%%%%%%%%%%%%%%%%
%%%%%%%%%%%%%%%%%%%%%%%%%%%%

With no loss in generality we will rewrite the symmetric polynomial $\varphi(F,m)$ as
\begin{equation}\label{phiF}
\varphi(F,m) = s_{m,1}^{2} +\lambda s_{m,2} + \mu s_{m,1}, \;\;\; \mbox{with}\;\; \lambda \mu \not \equiv 0 \pmod{p},
\end{equation}
and for any  $S=[ u_{1}]^{t_{1}}\cdots [ u_{r}]^{t_{r}}$ we have 
\begin{equation}\label{phiS}
\varphi(S) \equiv (\;\sum_{i=1}^{r} t_{i}u_{i}  \;)^{2} + \sum_{i=1}^{r} t_{i}u_{i}(\lambda u_{i}+\mu) \pmod{p}.
\end{equation}

Let us define
%%%% OMEGA
\begin{equation} \label{omega}
\om = -\mu\cdot\lambda^{-1}.
\end{equation}
%%%%%%%%%%% 
 %%%%%% Lemma 3.1
\begin{lemma}\label{omegaMax}
The sequence  $[\om]^{p-1}$ is an extremal $\varphi$-zero free sequence among all the sequences of the form $[u]^{k}$.
\end{lemma}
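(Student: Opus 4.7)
The plan is to extract a closed-form expression for $\varphi([u]^k)$ directly from the evaluation formula \eqref{phiS}, and then read off both zero-freeness and maximality from that formula.

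First, specialising \eqref{phiS} to $r=1$, $t_1 = k$, $u_1 = u$ gives
\[
\varphi([u]^k) \equiv (ku)^2 + ku(\lambda u + \mu) \equiv ku\bigl((k+\lambda)u + \mu\bigr) \pmod p.
\]
Substituting $u = \om = -\mu\lambda^{-1}$ into the bracket, the constant term $\mu$ cancels against $\lambda\om$, leaving $(k+\lambda)\om + \mu = -k\mu\lambda^{-1}$. Therefore
\[
\varphi([\om]^k) \equiv k \om \cdot (-k\mu\lambda^{-1}) \equiv k^2\mu^2\lambda^{-2} \pmod p,
\]
which is nonzero for every $k \in \{1,\ldots,p-1\}$ because $\lambda\mu \not\equiv 0 \pmod p$. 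Since every nonempty subsequence of $[\om]^{p-1}$ has the form $[\om]^k$ with $1\le k \le p-1$, this proves that $[\om]^{p-1}$ is $\varphi$-zero free.

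Second, for the maximality assertion, I would argue that no sequence of the form $[u]^k$ with $k \ge p$ can be $\varphi$-zero free. Indeed, the factorisation above shows $\varphi([u]^p) \equiv p\cdot u \cdot\bigl((p+\lambda)u+\mu\bigr) \equiv 0 \pmod p$, so any such sequence contains the $\varphi$-zero subsequence $[u]^p$. Combining the two halves, $[\om]^{p-1}$ is $\varphi$-zero free and of maximal length among sequences of the shape $[u]^k$, which is precisely the assertion of the lemma.

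There is essentially no obstacle: once the product form $\varphi([u]^k) = ku((k+\lambda)u+\mu)$ is in hand, everything reduces to checking that $\om$ is the unique element of $\Fp^{*}$ that kills the second factor exactly at the point $k \equiv 0 \pmod p$ rather than at some $k \in \{1,\ldots,p-1\}$; this follows immediately from solving $(k+\lambda)u+\mu \equiv 0$ for $u$. The only mild hypothesis to record is that $\om$ is well-defined and nonzero, which is guaranteed by $\lambda\mu \not\equiv 0 \pmod p$ in \eqref{phiF}.
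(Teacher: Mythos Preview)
Your proof is correct and follows essentially the same approach as the paper: both derive the factorisation $\varphi([u]^k)\equiv ku\bigl((k+\lambda)u+\mu\bigr)$ and verify that $u=\om$ makes this nonzero for all $k\in\{1,\dots,p-1\}$. The only minor difference is in the maximality step---you first bound the length by $p-1$ via the trivial observation $\varphi([u]^p)\equiv 0$, whereas the paper directly shows that for every $u\neq\om$ some $t_0\in\{1,\dots,p-1\}$ already gives $\varphi([u]^{t_0})=0$ (yielding uniqueness as well); your closing paragraph in fact recovers this same argument.
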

\begin{proof}
We have
\begin{equation}\label{phi-omega}
\varphi([\om]^{t}) \equiv (t\mu\lambda^{-1})^{2} +\lambda t (\mu\lambda^{-1})^{2} - t\mu(\mu\lambda^{-1}) \equiv  (t\mu\lambda^{-1})^{2} \not \equiv 0  \pmod{p},
\end{equation}
for any $t\in\{1,2,3,\ldots, p-1\}$. Hence $S=[\om]^{p-1}$ is a $\varphi$-zero free sequence, and also observe that for any other value $u\neq\om$ there is a $t_{o}\in\{1,2,3,\ldots, p-1\}$ such that 
\[
 (t_{o} + \lambda) u + \mu \equiv 0 \pmod{p},
\]
in particular 
\[
\varphi([u]^{t_{o}}) = t_{o}u(t_{o}u + \lambda  u + \mu)\equiv 0 \pmod{p},
\]
completing the proof.
\end{proof}
%%%%%%%%%%%%%%%%%%%%%%%%%%
%% Lemma 3.2
\begin{lemma} \label{p=3}
 Let $p=3$ and consider \eqref{phiF}, then
\begin{enumerate}
\item  $D(\varphi, 3)=3\;\;$  if $\;\;\lambda = 1;$
\item $D(\varphi, 3)=4\;\;$  if  $\lambda = 2.$ 

\end{enumerate}
\end{lemma}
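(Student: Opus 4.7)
The plan is to reduce to sequences whose entries lie in $\{\om,-\om\}$ and then bound the multiplicity of each of these two elements using Lemma~\ref{omegaMax}. First, observe that $\varphi([0])=0$, so $0$ cannot appear in any $\varphi$-zero-free sequence; for $p=3$ this forces all entries to lie in $\Fp^{*}=\{1,2\}$, which coincides with $\{\om,-\om\}$ because $\om\in\Fp^{*}$ and $-\om\neq\om$. Hence every $\varphi$-zero-free sequence has the form $[\om]^{a}[-\om]^{b}$.

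Next, Lemma~\ref{omegaMax} shows that $[\om]^{p-1}=[\om]^{2}$ is $\varphi$-zero-free, so $a\leq 2$, and for the remaining nonzero element $-\om$ its proof produces a least exponent $t_{o}\in\{1,\dots,p-1\}$ with $\varphi([-\om]^{t_{o}})=0$, characterized by $(t_{o}+\lambda)(-\om)+\mu\equiv 0\pmod{3}$. Substituting $-\om=\mu\lambda^{-1}$ and simplifying yields $t_{o}\equiv -2\lambda\equiv\lambda\pmod{3}$, so $t_{o}=1$ when $\lambda=1$ and $t_{o}=2$ when $\lambda=2$.

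I then split into cases. If $\lambda=1$, since $t_{o}=1$ the singleton $[-\om]$ is itself a $\varphi$-zero subsequence, forcing $b=0$; the longest $\varphi$-zero-free sequence is therefore $[\om]^{2}$, yielding $D(\varphi,3)=3$. If $\lambda=2$, then $t_{o}=2$ gives $b\leq 1$, capping the total length at $3$; to obtain the matching lower bound I would verify directly from~\eqref{phiS} that the candidate $S=[\om]^{2}[-\om]$ is $\varphi$-zero-free by evaluating $\varphi(T)$ for each of its five nonempty subsequences.

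The only real obstacle is this final verification for $\lambda=2$, and the delicate subsequence is $T=[\om][-\om]$: here $s_{m,1}(T)=0$, so the nonvanishing of $\varphi(T)$ rests entirely on $\lambda s_{m,2}(T)=2\lambda\om^{2}$, which is nonzero modulo~$3$. The values on $[\om]$ and $[\om]^{2}$ are covered by~\eqref{phi-omega}, the value on $[-\om]$ follows from the computation of $t_{o}$ at $t=1$, and $[\om]^{2}[-\om]$ is a short direct calculation; once all five values are confirmed nonzero, the conclusion $D(\varphi,3)=4$ follows.
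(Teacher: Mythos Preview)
Your argument is correct and follows essentially the same route as the paper: both proofs observe that entries must lie in $\{1,2\}=\{\om,-\om\}$ and then finish by a finite case-check. The only cosmetic difference is that you organize the check through Lemma~\ref{omegaMax} and the exponent $t_{o}$ (so that $a\le 2$ and $b\le t_{o}-1$ come for free), whereas the paper simply evaluates $\varphi$ on the handful of candidate subsequences directly in terms of $\mu$ and $2\mu$; the two identifications $\om=2\mu$ for $\lambda=1$ and $\om=\mu$ for $\lambda=2$ make the computations line up exactly.
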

\begin{proof} Let $u\in\{1,2\}$.  Since 
\[
\varphi([u]^{k}) = ku(ku + \lambda u + \mu) \equiv 0 \;\;\mbox{if}\;\;\; k\equiv 0 \pmod{3},
\]
any $\varphi$-zero free sequence in $\mathbb{F}_{3}$ has the form $[1]^{r}[2]^{s}$ with $0\leq r,s \leq 2$ and $(r,s)\neq (0,0)$. 
For $\lambda = 1$ we have
\[
\varphi([\mu]) = 0, \;\;\;\mbox{and}\;\;\;\varphi([2\mu]^{2}) = 1,
\]
so an extremal $\varphi$-zero free sequence is $[2\mu]^{2}.$

For $\lambda = 2$ we have
\[
\begin{array}{ll}
 \varphi([\mu])=\varphi([\mu]^{2}) = \mu^{2} = 1,  & \varphi([2\mu]) = 2\mu^{2} = 2, \\
\varphi([\mu]^{2}[2\mu]) = 2\mu^{2} = 2, &  \varphi([2\mu]^{2}) = 0.
\end{array}
\]
Thus, an extremal $\varphi$-zero free sequence is $[\mu]^{2}[2\mu]$.
\end{proof} 
%%%%%%%%%%%%%%%%%
%%%%%%%%%%%%%%%%%%%%
%% Lemma 3.3
\begin{lemma}\label{L1}
Let $p$ be a prime number such that $p\ge 5.$ Then, there exists an $u^{*}\in\Fp^{*}$,  such that $-u^{*}(\lambda u^{*}+\mu)$ is not a quadratic residue modulo $p.$ 
\end{lemma}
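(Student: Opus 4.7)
The plan is to argue by contradiction using a standard quadratic character sum. Set $h(u) = -u(\lambda u + \mu) = -\lambda u(u - \om)$, recalling from \eqref{omega} that $\om = -\mu\lambda^{-1}$; the zeros of $h$ in $\Fp$ are exactly $0$ and $\om$, and $\om \neq 0$ because $\mu \neq 0$. I will use the Legendre symbol $\chi : \Fp \to \{-1,0,1\}$, extended by $\chi(0)=0$, and will try to show $\chi(h(u^{*})) = -1$ for some $u^{*} \in \Fp^{*}$.

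Suppose, for contradiction, that $h(u)$ is either zero or a nonzero quadratic residue for every $u \in \Fp^{*}$. Then $\chi(h(u)) = 1$ for every $u \in \Fp^{*} \setminus \{\om\}$, a set of $p-2$ elements, while the two remaining points $u=0$ and $u=\om$ contribute $0$. Hence
\[
\sum_{u \in \Fp} \chi(h(u)) \; = \; p-2.
\]
On the other hand, I would compute this sum directly. Pulling out the constant $\chi(-\lambda)$ and applying the classical identity
\[
\sum_{u \in \Fp} \chi\bigl(u(u+a)\bigr) \; = \; -1 \qquad (a \in \Fp^{*}),
\]
whose short proof writes $u(u+a)=u^{2}(1+a u^{-1})$ and exploits $\sum_{w \in \Fp^{*}}\chi(w)=0$, yields
\[
\sum_{u \in \Fp}\chi(h(u)) \; = \; -\chi(-\lambda) \; \in \; \{-1,+1\}.
\]

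Comparing the two evaluations forces $p-2 = \pm 1$, which is impossible for $p \ge 5$, proving the lemma. The only delicate ingredient is the classical character-sum identity above; once that is available, the argument is just a matter of assembling the two counts and observing that the hypothesis $p \ge 5$ makes $p-2$ strictly exceed the absolute value of the character sum. I expect no serious obstacle beyond recording this identity carefully and verifying that removing the two zero contributions leaves exactly $p-2$ terms.
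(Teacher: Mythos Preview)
Your argument is correct. The character-sum identity $\sum_{u\in\Fp}\chi(u(u+a))=-1$ for $a\neq 0$ (equivalently, $\sum_{u}\chi(f(u))=-\chi(\text{leading coeff.})$ for any quadratic $f$ with distinct roots) is standard, and once it is in hand your count $p-2=\pm 1$ gives the contradiction exactly as you say. The point that the conclusion produces $u^{*}\in\Fp^{*}$ (and not just $u^{*}\in\Fp$) is handled correctly, since your contradiction hypothesis is stated for $u\in\Fp^{*}$.

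Your route is genuinely different from the paper's. The paper argues constructively: it rewrites $-u(\lambda u+\mu)=-\lambda^{-1}\mu^{2}\,j(j+1)$ with $j=\lambda\mu^{-1}u$, so that $\bigl(\tfrac{-u(\lambda u+\mu)}{p}\bigr)=\bigl(\tfrac{-\lambda}{p}\bigr)\bigl(\tfrac{j}{p}\bigr)\bigl(\tfrac{j+1}{p}\bigr)$, and then observes that one can always find consecutive integers whose Legendre symbols agree, and also consecutive integers whose symbols disagree (the first because $1$ and $4$ are both squares, the second because there are both residues and nonresidues). Picking $j$ from the appropriate pair forces the product to be $-1$. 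This is more elementary and yields an explicit $u^{*}$, whereas your character-sum proof is nonconstructive but shorter, and in fact shows the stronger quantitative fact that roughly half of the values $-u(\lambda u+\mu)$ are nonresidues.
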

\begin{proof} 
First observe that  we can always find $z_{0}\in \Fp^{*}$ and $z_{1}\in\{1,2,3,4\}$ such that
\[
\left ( \frac{z_{o}}{p} \right ) = - \; \left ( \frac{z_{o}+ 1}{p} \right )\;\;\;\mbox{and}\;\;\; \left ( \frac{z_{1}}{p} \right ) = \left ( \frac{z_{1}+ 1}{p} \right ).
\]
Considering $j =\lambda \mu^{-1}u$, we have
\[
\left ( \frac{-u(\lambda u+\mu)}{p} \right ) = \left ( \frac{-\lambda}{p} \right )\cdot  \left ( \frac{\mu^{2}}{p} \right ) \cdot \left ( \frac{j}{p} \right ) \cdot \left ( \frac{j+1}{p} \right ),
\]
since 
\[
-\,u(\lambda u+\mu) \; = - \, \lambda^{-1}\mu^{2}.(\lambda \mu^{-1}u).(\lambda \mu^{-1}u +1) \; = \; - \, \lambda^{-1}\mu^{2}.(j.(j +1)).
\]
Hence,  according to $-\lambda$ being or not a quadratic residue modulo $p$,  we can choose $j\in\{z_{0}, \,z_{1}\}$, in such a way that  $-u(\lambda u+\mu)$ is not a quadratic residue modulo $p.$ 
\end{proof}

%%%%%%%%%%%%%%%%%%%%%%
%%%%%%%%%%%%%%%%%
%% Theorem 3.4
\begin{theorem}\label{lb}
Let $p\ge 5$ be a prime number and $s$ a positive integer. If for all $i \in \{1,\dots,s\},$ we have $\left(\frac{i}{p}\right)=1,$ then $2p-1 \ge D(\varphi,p)\ge p+s.$
\end{theorem}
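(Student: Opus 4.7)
My plan is to produce an explicit $\varphi$-zero free sequence of length $p-1+s$ by extending the sequence $[\om]^{p-1}$ from Lemma \ref{omegaMax}. The upper bound $D(\varphi,p)\le 2p-1$ is immediate from Theorem \ref{Principal}(iv), since the hypothesis $\lambda\mu\ne 0$ in \eqref{phiF} is exactly the condition $ab\ne 0$ there.

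First I would invoke Lemma \ref{L1} to pick $u^{*}\in\Fp^{*}$ so that $A:=-u^{*}(\lambda u^{*}+\mu)$ is a non-quadratic residue modulo $p$; in particular $A\ne 0$, so $u^{*}\ne\om$. I then propose the candidate
\[
S=[\om]^{p-1}[u^{*}]^{s},
\]
which has length $p-1+s$. Every nonempty subsequence has the form $T=[\om]^{t}[u^{*}]^{k}$ with $0\le t\le p-1$, $0\le k\le s$, and $(t,k)\ne(0,0)$, so the task reduces to showing $\varphi(T)\ne 0$ for each such $T$.

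The key computation combines \eqref{phiS} with the identity $\lambda\om+\mu=0$, which eliminates the contribution of $\om$ to the sum $\sum t_iu_i(\lambda u_i+\mu)$:
\[
\varphi(T)=(t\om+ku^{*})^{2}+t\om(\lambda\om+\mu)+ku^{*}(\lambda u^{*}+\mu)=(t\om+ku^{*})^{2}-kA.
\]
If $k=0$ then $t\ne 0$, and $\varphi(T)=(t\om)^{2}\ne 0$ because $\om=-\mu\lambda^{-1}\ne 0$. If $k\ge 1$, the hypothesis $\left(\frac{i}{p}\right)=1$ for $1\le i\le s$ makes $k$ a nonzero quadratic residue modulo $p$, so $kA$ is a nonzero non-residue; it cannot equal the square $(t\om+ku^{*})^{2}$. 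In either case $\varphi(T)\ne 0$, so $S$ is $\varphi$-zero free and $D(\varphi,p)\ge p+s$.

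I do not foresee a substantive obstacle: the whole argument rests on the crisp square/non-square dichotomy together with the convenient cancellation supplied by $\lambda\om+\mu=0$. The only delicate point, namely the existence of a $u^{*}$ with $-u^{*}(\lambda u^{*}+\mu)$ a non-residue, has already been secured in Lemma \ref{L1}.
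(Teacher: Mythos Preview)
Your proposal is correct and follows essentially the same approach as the paper's own proof: both construct the sequence $S=[\om]^{p-1}[u^{*}]^{s}$ using Lemma~\ref{L1}, compute $\varphi(T)=(t\om+ku^{*})^{2}+ku^{*}(\lambda u^{*}+\mu)$ via the cancellation $\lambda\om+\mu=0$, and conclude by the square/non-square dichotomy. Your version is slightly more explicit in separating out the case $k=0$, but the argument is otherwise identical.
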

\begin{proof}
The upper bound follows from Theorem \ref{Principal} (iv).
Consider the sequence $S=[\om]^{p-1}[u^{*}]^{s}$, with $u^{*}$ given in Lemma \ref{L1} and $\om$ defined in \eqref{omega}. Since $\om(\lambda \om + \mu) =0$, we have  $u^{*} \neq \om$. Let $T=[\om]^{t}[u^{*}]^{i}$ be any subsequence of $S$. Hence (see \eqref{phiS})
\[
\varphi(T) =  (t\om + i u^{*})^{2} + i(u^{*}(\lambda u^{*}+\mu)).
\]
According to the hypothesis and Lemma \ref{L1} we have  
\[
\left(\dfrac{- i(u^{*}(\lambda u^{*}+\mu))}{p}\right)=\left(\dfrac{ i}{p}\right)\cdot \left(\dfrac{- u^{*}(\lambda u^{*}+\mu))}{p}\right)= -1.
\]
Hence  $S$ is a $\varphi$-zero free sequence, thus $D(\varphi,p)\ge (p-1) + s + 1$ as desired.
\end{proof}
%%%%%%%%%%%%%%%%%%%%%%
%%%%%%%%%%%%
At this point, since $\om$ has the property described in Lemma \ref{omegaMax}, one would expect to always find a $\varphi$-zero free sequence of the form $[\om]^{p-1}[u_1]^{k_1}\cdots[u_t]^{k_t}$ in the set $M(\varphi, p)$. But thus far we do not have an answer for this supposition, although if this is the case one would obtain a better upper bound for the constant $D(\varphi,p)$. 
%%%%%%%%%%%%%%%%
%%% Theorem 3.5
%%%%%%%%%%%%
\begin{theorem}\label{cota}
If among the extremal $\varphi$-zero  free sequences in the set $M(\varphi, p)$ we can find a sequence of the type 
$S=[\om]^{p-1}[u_1]^{k_1}\cdots[u_t]^{k_t},$  with $\om$  as defined in \eqref{omega},  then \[D(\varphi,p) \leq (p-1) + \dfrac{p-1}{2}.\]

\end{theorem}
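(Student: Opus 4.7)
The plan is to fix the assumed extremal $\varphi$-zero free sequence $S = [\om]^{p-1}[u_1]^{k_1}\cdots[u_t]^{k_t}$, where the $u_i\in \Fp\setminus\{\om\}$ are distinct, set $m = k_1+\cdots+k_t$, and prove $m \le (p-3)/2$; since $|S|=D(\varphi,p)-1=(p-1)+m$, this yields $D(\varphi,p)\le (p-1)+(p-1)/2$. The starting observation is that $\lambda\om+\mu = 0$ (by the definition \eqref{omega}), so any subsequence $T=[\om]^s T''$ of $S$, with $T''$ a subsequence of $T' := [u_1]^{k_1}\cdots[u_t]^{k_t}$, satisfies by \eqref{phiS}
\[
\varphi(T) = (s\om + V(T''))^2 + W(T''),
\]
where $V(T'')=\sum_i j_i u_i$, $W(T'')=\sum_i j_i\alpha_i$, and $\alpha_i:=u_i(\lambda u_i+\mu)\ne 0$ (since $u_i\notin\{0,\om\}$).

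For each nonempty $T''$, as $s$ varies over $\{0,\ldots,p-1\}$ the element $s\om+V(T'')$ covers all of $\Fp$, so $(s\om+V(T''))^2$ attains every value in $\{0\}\cup\mathrm{QR}(p)$; hence $\varphi(T)\neq 0$ for every such $s$ is equivalent to $-W(T'')$ being a quadratic non-residue, i.e.\ $W(T'')\in H := -\mathrm{NR}(p)$, a set of size $(p-1)/2$. Writing $A_i = \{0,\alpha_i,2\alpha_i,\ldots,k_i\alpha_i\}$, I recognize
\[
X := A_1+A_2+\cdots+A_t = \{W(T''):T''\subseteq T'\},
\]
so the inclusion $X\setminus\{0\}\subseteq H$ forces $|X|\le (p+1)/2$, while the iterated Cauchy--Davenport inequality gives $|X|\ge \min(p,m+1)$.

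The proof is completed by a case split on whether all the $\alpha_i$'s coincide. If $\alpha_1=\cdots=\alpha_t=\alpha$ then $X=\{j\alpha: 0\le j\le m\}$, and since $H$ is a coset of $\mathrm{QR}(p)$ containing $\alpha$, the inclusion $X\setminus\{0\}\subseteq H$ reduces to $\{1,\ldots,m\}\subseteq \mathrm{QR}(p)$. A short number-theoretic argument rules out $m\ge (p-1)/2$ for $p\ge 5$: such an $m$ would force $\mathrm{QR}(p)=\{1,\ldots,(p-1)/2\}$ and $\mathrm{NR}(p)=\{(p+1)/2,\ldots,p-1\}\ni -1$, while $2$ and $(p-1)/2$ would both be residues, yet their product $p-1\equiv -1$ would have to be a residue too---contradiction. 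Hence $m\le (p-3)/2$. If instead $\alpha_i\ne \alpha_j$ for some $i,j$, then $A_i$ and $A_j$ are arithmetic progressions with distinct common differences; Vosper's theorem upgrades the bound to $|A_i+A_j|\ge |A_i|+|A_j|$, and propagating this through the remaining Cauchy--Davenport steps gives $|X|\ge m+2$. Combining with $|X|\le (p+1)/2$ once again yields $m\le (p-3)/2$.

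The main obstacle I anticipate is exactly this dichotomy. The ``all $\alpha_i$ equal'' case is the equality case of Cauchy--Davenport, so the sumset-size argument alone does not suffice; it requires the elementary observation above about $\mathrm{QR}(p)$ to exclude $m=(p-1)/2$. The generic case needs Vosper's theorem to gain the extra ``$+1$'' in the sumset bound required to match the stated constant $(p-1)+(p-1)/2$ exactly. Everything else---the rewriting of $\varphi(T)$ using $\lambda\om+\mu=0$ and the identification of $X$ with a sumset---is a direct exploitation of the role of $\om$ already brought out in Lemma \ref{omegaMax}.
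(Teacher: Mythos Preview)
Your approach is genuinely different from the paper's. The paper does not use Cauchy--Davenport or Vosper. After writing $\varphi(T)=A(j,I)^2-B(I)$ exactly as you do (so that every nonzero index tuple $I$ forces $B(I)$ to be a nonresidue), the paper instead builds an explicit chain $\mathcal{J}\subset\mathcal{N}^{*}$ of cardinality $k_1+\cdots+k_t$ that is totally ordered coordinatewise; for $I_0\le I_1$ in such a chain one has $I_1-I_0\in\mathcal{N}^{*}$, whence $B(I_1)-B(I_0)=B(I_1-I_0)\ne 0$ by linearity of $B$. Thus $B$ is injective on $\mathcal{J}$ with values among the $(p-1)/2$ nonresidues, giving $m=k_1+\cdots+k_t\le (p-1)/2$ and $|S|\le (p-1)+(p-1)/2$. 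Your sumset reformulation $X=A_1+\cdots+A_t$ together with Cauchy--Davenport reaches the same inequality more directly and transparently.

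Note, however, that the paper's proof actually stops at $|S|\le (p-1)+(p-1)/2$; since $|S|=D(\varphi,p)-1$, this literally gives $D(\varphi,p)\le (p-1)+(p-1)/2+1$, one unit weaker than the displayed statement. Your extra work aiming at $m\le (p-3)/2$ would close this gap, but the Vosper step is not quite right: an arithmetic progression $A_i=\{0,\alpha_i,\dots,k_i\alpha_i\}$ has common difference $\alpha_i$ \emph{or} $-\alpha_i$, so Vosper's equality case is not excluded when $\alpha_j=-\alpha_i$. Indeed in that case $A_i+A_j=\{-k_j\alpha_i,\dots,k_i\alpha_i\}$ has size exactly $|A_i|+|A_j|-1$, and no extra ``$+1$'' is gained. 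To complete your argument you would need to handle the situation ``all $\alpha_\ell\in\{\alpha,-\alpha\}$'' separately, extending your Case~1 analysis to two-sided intervals $\{-m_-,\dots,m_+\}\setminus\{0\}$ contained in a coset of the quadratic residues.
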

\begin{proof}
After reordering the indexes, we can assume that $k_1\le\cdots\le k_t.$  Let $T=[\om]^{j}[u_1]^{i_1}\cdots[u_t]^{i_t}$ be any subsequence of $S$. Since we are assuming that $S$ is a $\varphi$-zero free sequence, we must have  $\varphi(T)\not\equiv 0 \pmod p.$ 

Let $\mathcal{N}=\left\{(\ell_1,\dots,\ell_t); \;\; 0\le \ell_r\le k_r,\;  r=1,\dots,t\right\}$, then we can write  
\begin{equation}
\varphi(T)=A(j,I)^2-B(I).
\end{equation}
with $I=(i_1,\dots, i_t)\in \mathcal{N}$ and 
\begin{equation}\label{ABs}
B(I)= -\displaystyle\sum_{r=1}^{t}i_rv_r(v_r+1)\;\;\;\mbox{and}\;\;\;
A(j,I)=j\om+\displaystyle\sum_{r=1}^{t}i_rv_r.
\end{equation}

Let $\mathcal{N}^{\ast}=\mathcal{N}\backslash \{(0,\dots,0)\}$. Note that for any  $I\in \mathcal{N}^{\ast},$ we must have
\begin{equation}\label{eq.11}
\left(\dfrac{B(I)}{p}\right)=-1,
\end{equation}
otherwise $B(I)\equiv 0 \pmod p$ or $\left(\dfrac{B(I)}{p}\right)=1,$ which means that we could find $j \in \{0,1,2,...,p-1\}$ such that $A(j,I)^2\equiv B(I) \pmod p.$ In that case we would have found two $\varphi$-zero subsequences 
\[T_{0} = [\om]^{j_{0}}[u_1]^{i_1}\cdots[u_t]^{i_t} \mbox{ and } T_{1}=[\om]^{j_{1}}[u_1]^{i_1}\cdots[u_t]^{i_t}\]
of $S$ that is a $\varphi$-zero free sequence, a contradiction.
%%%%%%%%%%%%

Now define the following disjoint subsets of $\mathcal{N}^{\ast}:$ 
\begin{eqnarray*}
\mathcal{I}_{1} &=& \{(1,0,\dots,0),(1,1,0,\dots,0),\dots, (1, 1,\dots,1)\}\\
\mathcal{I}_{2} &=& \{(2,1,\dots,1),(2,2,1,\dots,1),\dots, (2, 2,\dots,2)\}\\
\mathcal{I}_{3} &=& \{(3,2,\dots,2),(3,3,2,\dots,2),\dots, (3, 3,\dots,3)\}\\
      &\vdots &\\
\mathcal{I}_{k_t} &=& \{(k_t,k_t-1,\dots,k_t-1),\dots, (k_t,k_t,\dots,k_t)\}\\
\mathcal{I}_{k_t+1} &=& \{(k_t+1,k_t,\dots,k_t),\dots, (k_t+1,k_t+1,\dots,k_t)\}\\
      &\vdots &\\
\mathcal{I}_{k_{t-1}} &=& \{(k_{t-1},k_{t-1}-1,\dots,k_{t-1}-1,k_t),\dots, (k_{t-1},k_{t-1},\dots, k_{t-1}, k_t)\}\\
\mathcal{I}_{k_{t-1}+1} &=& \{(k_{t-1}+1,k_{t-1},\dots,k_{t-1},k_t),\dots, (k_{t-1}+1,k_{t-1}+1,\dots, k_{t-1}+1, k_{t-1}, k_t)\}\\
      &\vdots &\\
\mathcal{I}_{k_{2}} &=& \{(k_{2},k_{2}-1, k_3, \dots, k_t), (k_{2},k_{2}, k_3, \dots, k_t)\}\\
\mathcal{I}_{k_{2}+1} &=& \{(k_{2}+1,k_{2}, k_3, \dots, k_t)\}\\
			&\vdots &\\
\mathcal{I}_{k_{1}} &=& \{(k_{1},k_{2}, k_3, \dots, k_t)\}\\			
\end{eqnarray*}

Let $\mathcal{J}=\mathcal{I}_{1}\cupdot \mathcal{I}_{2}\cupdot\cdots\cupdot \mathcal{I}_{k_1}$ and observe that  $\mathcal{J}\subset \mathcal{N}^{\ast}.$ Also note that the cardinality of this set $\mathcal{J}$ is equal to

\begin{eqnarray*}
|\mathcal{J}| & = & tk_t+(t-1)(k_{t-1}-k_t)+(t-2)(k_{t-2}-k_{t-1})+ \cdots +(t-j)(k_{t-j}-k_{t-j+1})\\
    &   & + \cdots +2(k_2-k_3)+k_1-k_2\\
    & = &k_{1}+k_{2}+ \cdots + k_{r}.
\end{eqnarray*}

Let $I_0=(i_1,\dots, i_t), \, I_1=(i_1^{*}, \dots, i_t^{*})\in \mathcal{J}$.  By the formation of $\mathcal{J}$,  we may always assume $i_s\le i_s^{*},$ for all $s\in\{1,\dots, t\}$,  and for at least one $r\in\{1,\dots, t\}$ we have $i_r < i_r^{*}.$  Thus,

\[
I^{*}=(i_1^{*}-i_1, \dots, i_r^{*}-i_r, \dots, i_t^{*}-i_t) \in \mathcal{N}^{\ast}
\]
and therefore, according to \eqref{eq.11}, we must have 
\begin{equation}
B(I^{*})\equiv B(I_1) - B(I_{0})\not\equiv 0 \pmod p,
\end{equation} 
since   $B(I)$ is a linear form in the indexes $i_{j}$ (see \eqref{ABs}). Therefore the values of $B(I)$ are all distinct for any $I\in\mathcal{J}$, and also $\left(\frac{B(I)}{p}\right)=-1,$ for all $I\in \mathcal{J},$ (see \eqref{eq.11}). The conclusion is that 
\[
|\mathcal{J}|=k_{1}+k_{2}+ \cdots + k_{r}\le \frac{p-1}{2}.
\]
Therefore, the length of the sequence $S$ is at most
\[
|S|=p-1+k_{1}+k_{2}+\cdots + k_{r}\le p-1 + \frac{p-1}{2}.
\]
\end{proof}
%%%%%%%%%%%%%%%%%%%%%%%%%%%%%%%%
%%%%%%%%%%%%%%%%%%%%%%%%%%%%%%%%%%
\section{ The Special Case of  $\varphi(F,m) = s_{m,1}^{2} +s_{m,2} + s_{m,1}$}
%%%%%%%%%%%%%%%%%%%%%%%%%%%%%%%%%%%%%%%%%%%%%%%%%
%%%%%%%%%%%%%%%%%%%%%%%%%%%%%%%%%%%%%%%%%%%%%%%%%%

We want to present some results and comments about this special case, as a way of shedding more light on the bounds given in Theorems \ref{lb} and \ref{cota}. Let us start with the corollary below, which  is a straightforward consequence of the {\it Quadratic Reciprocity Law} and Theorem \ref{lb}.

\begin{corollary} Let $p$ be an odd prime. Then
\begin{enumerate}
\item [(i)] $D(\varphi,p)\geq p+2\;$ if $\;p\equiv \pm 1  \pmod{24};$
\item [(ii)] $D(\varphi,p)\geq p+4\;$ if $\;p\equiv \pm 1  \pmod{60};$
\item [(iii)] $D(\varphi,p)\geq p+6\;$ if $\;p\equiv\pm 1 \, \mbox{or} \, \pm 49  \pmod{120}.$
\end{enumerate}
\end{corollary}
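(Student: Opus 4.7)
The plan is to apply Theorem \ref{lb} directly: it guarantees $D(\varphi,p)\geq p+s$ whenever $\left(\frac{i}{p}\right)=1$ for all $i=1,2,\ldots,s$. So each of the three parts reduces to checking, via the Law of Quadratic Reciprocity, that the stated congruence on $p$ forces an initial segment of small positive integers to be quadratic residues modulo $p$.

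First I would assemble the standard Legendre-symbol facts: $\left(\frac{1}{p}\right)=\left(\frac{4}{p}\right)=1$ trivially; $\left(\frac{2}{p}\right)=1$ iff $p\equiv\pm1\pmod 8$ (by the supplementary law); $\left(\frac{3}{p}\right)=1$ iff $p\equiv\pm1\pmod{12}$ (by quadratic reciprocity, splitting on $p\bmod 4$); $\left(\frac{5}{p}\right)=\left(\frac{p}{5}\right)=1$ iff $p\equiv\pm1\pmod 5$ (using $5\equiv 1\pmod 4$); and $\left(\frac{6}{p}\right)=\left(\frac{2}{p}\right)\left(\frac{3}{p}\right)$ by multiplicativity.

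Then I would handle the three parts in turn. For (i), the hypothesis $p\equiv\pm1\pmod{24}$ forces $p\equiv\pm1\pmod 8$, so $2$ is a quadratic residue; this makes $s=2$ eligible and Theorem \ref{lb} gives $D(\varphi,p)\geq p+2$. For (ii), the hypothesis $p\equiv\pm1\pmod{60}$ is combined with the reciprocity facts above to ensure $\left(\frac{i}{p}\right)=1$ for $i=1,2,3,4$, so $s=4$ qualifies and yields $D(\varphi,p)\geq p+4$. For (iii), I would examine each of the four residue classes $p\equiv\pm1,\pm49\pmod{120}$ separately, reducing each modulo $8$, $12$, and $5$ to verify that $\left(\frac{2}{p}\right)=\left(\frac{3}{p}\right)=\left(\frac{5}{p}\right)=1$ in every case; then $\left(\frac{6}{p}\right)=1$ is automatic, so $1,2,3,4,5,6$ are all quadratic residues and Theorem \ref{lb} with $s=6$ gives $D(\varphi,p)\geq p+6$.

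The main obstacle is the Chinese Remainder Theorem bookkeeping in part (iii): one has to verify that the four classes $\pm1,\pm49\pmod{120}$ are precisely those simultaneously satisfying $p\equiv\pm1\pmod 8$, $p\equiv\pm1\pmod{12}$, and $p\equiv\pm1\pmod 5$. This is a finite but error-prone arithmetic check where the interplay between the moduli $8$ and $12$ (whose intersection constrains $p\bmod 24$) must be tracked carefully; once it is done, the rest of the argument is essentially automatic.
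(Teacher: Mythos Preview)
Your plan is exactly the route the paper indicates (quadratic reciprocity together with Theorem~\ref{lb}), and for parts (i) and (iii) it goes through just as you sketch.

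There is, however, a genuine gap in your handling of (ii). You assert that $p\equiv\pm1\pmod{60}$ ensures $\left(\tfrac{i}{p}\right)=1$ for $i=1,2,3,4$, but this is false: since $60$ is divisible by $4$ and not by $8$, the hypothesis gives no control over $p\bmod 8$, hence none over $\left(\tfrac{2}{p}\right)$. Concretely, $p=61\equiv 1\pmod{60}$ has $61\equiv 5\pmod 8$, so $2$ is a non-residue modulo $61$; similarly $p=59\equiv -1\pmod{60}$ has $59\equiv 3\pmod 8$. In such cases Theorem~\ref{lb} cannot be invoked with any $s\ge 2$, so the argument you outline does not establish (ii). This appears to be an inaccuracy in the corollary as printed (the natural sufficient condition for $s=4$ requires a modulus divisible by $8$; for instance $p\equiv\pm1\pmod{24}$ already makes $2$ and $3$ quadratic residues and hence yields $D(\varphi,p)\ge p+4$), and your sketch inherits it without catching it. The ``error-prone arithmetic check'' you flagged only for (iii) is in fact needed for (ii) as well, and there it fails.
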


Using these ideas it is easy to obtain the following table containing lower bounds for the Davenport $\varphi$-constant  $D(\varphi ,p)$:
%%%%%%%%%%%%%%%%%%%%%%%%%%%%%%%%%%%%%%
\begin{table}[H]
		\centering
		{\small
			\begin{tabular}{lll}
				\hline\noalign{\smallskip}
				%\multirow{2}{*}{$D(\varphi,p)$} & \multirow{2}{*}{ $M(\varphi,p)$}
                                                     %& {Bounds for Values of $D(\varphi,p)$& \\
				\noalign{\smallskip}\hline\noalign{\smallskip}
				%\vspace{0.1cm}  $D(\varphi,3)=3$ &  $\{[-1]^{2}\}$ \\
				\vspace{0.1cm}  $D(\varphi,5)\geq 6$  && $D(\varphi,7)\geq 9 $ \\
				\vspace{0.1cm}  $D(\varphi,11)\geq 12$ &&  $D(\varphi,13)\geq 14$  \\
				\vspace{0.1cm}  $D(\varphi,17)\geq 19$ &&  $D(\varphi,23)\geq 27$\\
				\vspace{0.1cm}  $D(\varphi,29)\geq 30$ &&   $D(\varphi,31)\geq 33$ \\
				\vspace{0.1cm}  $D(\varphi,71)\geq 77$ &&   $D(\varphi,311)\geq 321$\\
										
				\noalign{\smallskip}\hline
			\end{tabular}
		}
		\caption{\footnotesize{Bounds for $D(\varphi,p)$}}
		\label{table_ex_p(5)}
	\end{table} 

%%%%%%%%%%%
On the other hand, a computer search using Sagemath \cite{SAGE}, gives us the table below containing the exact value for $D(\varphi ,p)$, for $p\leq 31$, and also presenting  extremal $\varphi$-zero free sequences, and the cardinality of the set $M(\varphi, p)$.

\begin{table}[H]
	\centering
	{\small
\begin{tabular}{ccc}
\hline\noalign{\smallskip}
\multirow{2}{*}{$D(\varphi,p)$} & \multirow{2}{*}{Some elements of $M(\varphi,p)$} &
\multirow{2}{*}{$|M(\varphi,p)|$}\\
&  & \\
\noalign{\smallskip}\hline\noalign{\smallskip}
\vspace{0.1cm}  $D(\varphi,3)=3$ &  $[-1]^{2}$ & $1$\\
\vspace{0.1cm}  $D(\varphi,5)=6$ &  $[1][-1]^{4},[3][-1]^{4}$ & $2$ \\
\vspace{0.1cm}  $D(\varphi,7)=9$ &  $[1]^2[-1]^{6},[1][5][-1]^{6},[5]^2[-1]^{6}$ & $3$\\  
\vspace{0.1cm}  $D(\varphi,11)= 12$ &  $[1]^8[2]^3,[3][-1]^{10}$ & $8$\\
\vspace{0.1cm}  $D(\varphi,13)=15$ &  $[1]^{10}[7]^4,[1][2][-1]^{12}$ & $6$\\
\vspace{0.1cm}  $D(\varphi,17)=19$ &  $[2]^2[-1]^{16},[2][7][-1]^{16}$ & $16$\\
\vspace{0.1cm}  $D(\varphi,19)=21$ &  $[1]^{16}[3]^4,[2][4][-1]^{18}$ & $22$\\
\vspace{0.1cm}  $D(\varphi,23)=27$ &  $[1]^4[-1]^{22},[1]^3[-2][-1]^{22}$ & $25$\\
\vspace{0.1cm}  $D(\varphi,29)=31$&  $[1]^{26}[12][13]^3,[6]^{12}[10]^{18},[1][3][-1]^{28}$ & $54$\\
\vspace{0.1cm}  $D(\varphi,31)=34$&  $[1][11]^2[-1]^{30},[7][8][22][-1]^{30}$ & $54$\\
\noalign{\smallskip}\hline
		\end{tabular}
	}
	\caption{\footnotesize{Exact Values of $D(\varphi,p)$}}
	\label{table_ex_p(6)}
\end{table} 

Observe that in the examples of extremal $\varphi$-zero free sequences given in Table 2, we can always find a sequence with the term $[-1]^{p-1}$. This strengthens our initial supposition  that it is always possible to find  extremal $\varphi$-zero free sequences containing the term $[\om]^{p-1}$ (see \eqref{omega}), for any prime $p$. But even for this special case we  were not able to prove it. On the other hand, the tables above show us, that the exact value of $D(\varphi,p)$ seems to be closer to the lower bound given in Theorem \ref{lb}. In conclusion, there are many points here to be clarified, and we believe that these are questions worthy to be pursued.

%%%%%%%%%%%%%%%%%%%%%%%%%%%%%%%%%%%%%%%%%%%

\end{document}